\documentclass[a4paper]{amsart}

\pdfoutput=1

\usepackage[T1]{fontenc}
\usepackage[utf8]{inputenc}
\usepackage{lmodern}
\usepackage{microtype}
\usepackage{amsmath,amssymb,braket,tikz}
\usetikzlibrary{tikzmark,calc}

\usepackage{url}
\usepackage{hyperref}
\usepackage{tikz}
\usepackage{tikz-cd} 
\usepackage{array}
\usepackage[english]{babel}
\usepackage{amsthm}
\usepackage{amsmath}
\usepackage{ dsfont }
\usepackage{caption}
\usepackage{amsfonts}
\usepackage{epigraph}
\usepackage{amssymb}
\usepackage{blindtext}
\usepackage{subcaption}
\usepackage{caption}
\usepackage{tabu}

\usepackage{multicol, latexsym}
\usepackage{blindtext}
\usepackage{mdframed}
\usepackage{ bbold }
\usepackage{afterpage}
\usepackage{comment}
\usetikzlibrary{positioning,shadows,arrows}

\usepackage{tikz}
\usetikzlibrary{trees}
\usepackage{float}
\newcommand{\R}{\mathds{R}}
\newcommand{\Q}{\mathds{Q}}
\newcommand{\Z}{\mathds{Z}}

\newcommand{\N}{\mathds{N}}

\newcommand{\cd}{\text{cd}}
\newcommand{\core}{\operatorname{core}}

\newtheorem*{theorem*}{Theorem}

\usepackage[inner=3cm,outer=3cm,top=3.3cm,bottom=3cm]{geometry}

\usepackage{amssymb, amsfonts}
\usepackage{mathtools}

\usepackage{paralist}

\usepackage{xcolor}

\usepackage[english]{babel} 
\usepackage[autostyle]{csquotes} 
\usepackage{tikz, tikz-cd}

\theoremstyle{plain}
\newtheorem{theorem}{Theorem}[section]
\newtheorem{prop}[theorem]{Proposition}
\newtheorem{corollary}[theorem]{Corollary}
\newtheorem{lemma}[theorem]{Lemma}
\newtheorem{conjecture}[theorem]{Conjecture}

\theoremstyle{definition}
\newtheorem{definition}[theorem]{Definition}
\newtheorem{example}[theorem]{Example}
\newtheorem{remark}[theorem]{Remark}

\newcommand{\rleft}{\mathopen{}\mathclose\bgroup\left}
\newcommand{\rright}{\aftergroup\egroup\right}

\newcommand{\st}{\: | \:}




\title{Fine Polyhedral Adjunction Theory}

\author{Sofía Garzón Mora}
\address{Mathematik, Freie Universität Berlin, 14195 Berlin, Germany.}
\email{sofia.garzon.mora@fu-berlin.de}

\author{Christian Haase}
\address{Mathematik, Freie Universität Berlin, 14195 Berlin, Germany.}
\curraddr{}
\email{haase@math.fu-berlin.de}

\keywords{Polyhedral Adjunction Theory, Cayley Polytopes, Fine interior.}

\begin{document}
\selectlanguage{english}

\begin{abstract}
Originally introduced by Fine and Reid in the study of plurigenera of toric hypersurfaces \cite{Fin83,Re85}, the Fine interior of a lattice polytope got recently into the focus of research. It is has been used for constructing canonical models in the sense of Mori Theory \cite{Bat20}. Based on the Fine interior, we propose here a modification of the original adjoint polytopes as defined in \cite{DiR14}, by defining the Fine adjoint polytope $P^{F(s)}$ of $P$ as consisting of the points in $P$ that have lattice distance at least $s$ to all valid inequalities for $P$. We obtain a Fine Polyhedral Adjunction Theory that is, in many respects, better behaved than its original analogue. Many existing results in Polyhedral Adjunction Theory carry over, some with stronger conclusions, as decomposing polytopes into Cayley sums, and most with simpler, more natural proofs as in the case of the finiteness of the Fine spectrum.
\end{abstract}

\maketitle{}

\section{Introduction}
\label{sec:intro}
Let $P \subset \R^d$ be a rational polytope, i.e., the convex hull of a finite set points in $\Q^n$.
For $s>0$ we define the \textit{Fine adjoint polytope} $P^{F(s)}$ as the set of points satisfying $\langle a,x \rangle \ge b+s$ whenever $\langle a,x \rangle \ge b$ is valid for all points in $P$. 
The Fine interior of a polytope, which refers to the Fine adjoint polytope where $s=1$, was firstly introduced by Jonathan Fine in \cite{Fin83}, where it was referred to as the heart of a polytope. Along the lines of this idea, we take here the Fine adjoint polytope as consisting of the points of $P$ with lattice distance at least $s$ to \textit{all valid inequalities} for $P$ (cf.~Definition~\ref{def: fineadj}).
In this paper we argue that the resulting Fine Polyhedral Adjunction Theory is a more natural version of the Polyhedral Adjunction Theory introduced in~\cite{DiR14} where only facet defining inequalities were considered. 
Various of the results obtained from the original Polyhedral Adjunction Theory carry over to the Fine case. Some of them give us stronger conclusions, such as the Decomposition Theorem~\ref{Thm: decomposition}. Moreover, we are often able to provide simpler, more elegant proofs, for example, as in the case of the finiteness of the Fine spectrum as mentioned in the Theorem~\ref{thm:finitespectrum}.\\

\begin{figure}[H]
    \centering
    \begin{minipage}{.3\textwidth}
        \centering
        \begin{tikzpicture}[scale=0.9]
\draw (0,0) -- (5,0) -- (5,2) -- (3,4) -- (0,4) -- (0,0);
\draw (1/2,1/2) -- (9/2,1/2) -- (9/2,2) -- (3,7/2) -- (1/2,7/2) -- (1/2,1/2);
\draw (1,1) -- (4,1) -- (4,2) -- (3,3) -- (1,3) -- (1,1);
\draw (3/2,3/2) -- (7/2,3/2) -- (7/2,2) -- (3,5/2) -- (3/2,5/2) -- (3/2,3/2);
\draw (2,2) -- (3,2);
\end{tikzpicture}
    \end{minipage}%
    \begin{minipage}{0.3\textwidth}
\centering
\begin{tikzpicture}[scale=0.9]
\draw (2,0) -- (1,2) -- (-1,2) -- (-2,0) -- (-1,-2) -- (1,-2) -- (2,0);
\draw (-1,3/2) -- (-3/2,1/2) -- (-3/2,-1/2) -- (-1,-3/2) -- (1,-3/2) -- (3/2,-1/2) -- (3/2,1/2) -- (1,3/2) -- (-1,3/2);
\draw (1,1) -- (-1,1) -- (-1,-1) -- (1,-1) -- (1,1);
\draw (1/2,1/2) -- (-1/2,1/2) -- (-1/2,-1/2) -- (1/2,-1/2) -- (1/2,1/2);
    
\filldraw[black] (0,0) circle (2pt);
    \end{tikzpicture}
    \end{minipage}
\begin{minipage}{0.3\textwidth}
        \centering
        \begin{tikzpicture}[scale=0.9]
\draw (0,0) -- (5,0) -- (1,4) -- (0,4) -- (0,0);
\draw (1/2,1/2) -- (4,1/2) -- (1,7/2) -- (1/2,7/2) -- (1/2,1/2);
\draw (1,1) -- (3,1) -- (1,3) -- (1,1);
\draw (7/3,4/3) -- (4/3,4/3) -- (4/3,7/3) -- (7/3,4/3);
\filldraw[black] (5/3,5/3) circle (2pt);
    \end{tikzpicture}
    \end{minipage}
    \caption{Examples of polytopes with their Fine adjoint polytopes.}
    \label{fig:adjoints}
\end{figure}
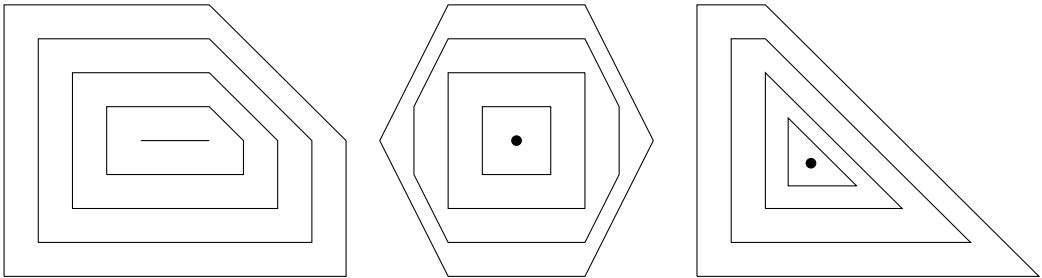

Of particular importance is the largest value $s_0$ of $s$ so that $P^{F(s)}$ is non-empty. For historical reasons, we record its reciprocal $\mu^F := 1/s_0$.
A first indication that the Fine theory is better behaved is the monotonicity of this parameter:
$$ P \subseteq Q \qquad \Longrightarrow \quad \mu^F(P) \ge \mu^F(Q),$$
which does not hold for the analogous parameter $\mu$, as defined in \cite{DiR14}.\\

One of the main results of Polyhedral Adjunction Theory is the decomposition theorem (cf.~\cite{DiR14, DN10, HBP09}). Here, we refer to a \textit{Cayley sum} $P_0 \star \cdots \star P_t$ of $t+1$ polytopes as being a polytope which is constructed by locating the $t+1$ polytopes along the vertices of a $t$-dimensional standard simplex and taking its convex hull (as in Definition \ref{Def: Cayleysum}). A conjecture posed by Dickenstein and Nill \cite[Conj. 1.2]{DN10} about the Cayley decomposition of $n$-dimensional polytopes with codegree bounded below by $\frac{n+3}{2}$ has been disproven by Higashitani \cite{Hi19}. Instead a weaker version was proposed \cite[Conj. 1.3]{DiR14} which states the following.

\begin{conjecture} \label{Conj: qcod}
If an $n$-dimensional lattice polytope $P$ satisfies $\mu(P) > \frac{n+1}{2}$, then $P$ decomposes as a Cayley sum of lattice polytopes of dimension at most $\lfloor 2(n + 1 - \mu(P)) \rfloor$.
\end{conjecture}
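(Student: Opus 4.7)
The plan is to reduce Conjecture~\ref{Conj: qcod} to the Fine Decomposition Theorem (Theorem~\ref{Thm: decomposition}) via the comparison $\mu^F(P) \ge \mu(P)$, which acts as the bridge between the two versions of adjunction theory.

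First I would establish the inequality $\mu^F(P) \ge \mu(P)$ for every rational polytope $P$. This is essentially immediate from the definitions: every facet-defining inequality of $P$ is, in particular, valid for $P$, so membership in $P^{F(s)}$ involves at least as many shifted constraints as membership in the classical adjoint $P^{(s)}$. Therefore $P^{F(s)} \subseteq P^{(s)}$ for every $s > 0$, and the supremum of those $s$ for which $P^{F(s)}$ is non-empty is bounded above by the corresponding supremum for $P^{(s)}$. Taking reciprocals yields $\mu^F(P) \ge \mu(P)$.

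With this comparison in hand, the reduction proceeds as follows. Assume $P$ is an $n$-dimensional lattice polytope satisfying $\mu(P) > (n+1)/2$. Then $\mu^F(P) \ge \mu(P) > (n+1)/2$, so the Fine Decomposition Theorem applies and produces a Cayley decomposition $P = P_0 \star \cdots \star P_t$ in which every summand $P_i$ is a lattice polytope of dimension at most $\lfloor 2(n+1-\mu^F(P))\rfloor$. Because $\mu^F(P) \ge \mu(P)$, this bound satisfies $\lfloor 2(n+1-\mu^F(P))\rfloor \le \lfloor 2(n+1-\mu(P))\rfloor$, which is exactly the dimension bound required by Conjecture~\ref{Conj: qcod}. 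In fact this argument yields a conclusion slightly stronger than the conjecture, since the Fine-theoretic dimension bound is tighter.

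The main obstacle is of course the Fine Decomposition Theorem itself; granting that result the classical conjecture follows cleanly. The delicate point to verify is that the Fine Decomposition Theorem, as it appears in the paper, has hypothesis precisely $\mu^F(P) > (n+1)/2$ (rather than a strictly stronger threshold phrased in terms of another Fine-theoretic invariant) and dimension bound $\lfloor 2(n+1-\mu^F(P))\rfloor$. If it is instead stated in a weaker form, I would bridge the gap by exploiting the monotonicity property $P\subseteq Q \Rightarrow \mu^F(P)\ge \mu^F(Q)$ highlighted in the introduction, applied to the sub-polytopes cut out by the tight valid inequalities computing $\mu^F(P)$, so as to transfer the decomposition through a chain of successively smaller polytopes until the Fine Decomposition Theorem applies in its available form.
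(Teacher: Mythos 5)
The statement you are attempting to prove is labeled as a Conjecture in the paper, and the authors explicitly write immediately afterward that ``This conjecture is still open but a slightly weaker version was proven in \cite[Thm. 3.4]{DiR14}.'' The paper does not, and does not claim to, prove Conjecture~\ref{Conj: qcod}; it only proves a Fine analogue (Theorem~\ref{Thm: decomposition}) under a different hypothesis. So there is no paper proof against which your argument can be matched.

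Beyond that, your reduction has a concrete gap at exactly the point you flag as ``the delicate point to verify.'' You assume the hypothesis of Theorem~\ref{Thm: decomposition} is $\mu^F(P) > (n+1)/2$; in fact it is $n > d^F(P)$, where $d^F(P) = 2\bigl(n - \lfloor \mu^F(P) \rfloor\bigr)$ if $\mu^F(P)\notin\N$ and $d^F(P) = 2\bigl(n-\mu^F(P)\bigr)+1$ if $\mu^F(P)\in\N$. For $\mu^F(P)\notin\N$ this unwinds to $\lfloor \mu^F(P) \rfloor > n/2$. When $n$ is even, this forces $\mu^F(P) \ge (n+2)/2$, which is strictly stronger than the bound $\mu^F(P) > (n+1)/2$ that you get from the conjecture's hypothesis and the inequality $\mu^F \ge \mu$. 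Concretely, for $n$ even and $\mu^F(P) \in \bigl((n+1)/2,\,(n+2)/2\bigr)$ one has $d^F(P) = n$, the hypothesis $n > d^F(P)$ fails, and Theorem~\ref{Thm: decomposition} cannot be invoked. (The paper's theorem also carries the side condition $P\ncong\Delta_n$, which you should address, although that case is easy.) Your fallback via monotonicity of $\mu^F$ under inclusion does not repair this: shrinking to a sub-polytope increases $\mu^F$ but replaces $P$ with a different polytope, and you give no mechanism for transporting a Cayley decomposition of a sub-polytope back to $P$ itself.
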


This conjecture is still open but a slightly weaker version was proven in \cite[Thm. 3.4]{DiR14} in which $\mu(P) > \frac{n+1}{2}$ is replaced with just $\mu(P) \geq \frac{n+2}{2}$. Because of the way the Fine adjoint polytopes are defined, we obtain the relation of the $\Q$-codegree and Fine $\Q$-codegree of a rational polytope $P$, $\mu^F(P)$, to be such that
\begin{equation} \label{eq: mufinegeq}
    \mu(P) \leq \mu^F(P).
\end{equation}
It is due to this that in Theorem~\ref{Thm: decomposition} we prove a Fine version of this decomposition theorem where essentially the same proof yields a stronger result.\\

Our second main result is related to Fujita's Spectrum Conjecture.

\begin{conjecture}[Spectrum Conjecture, Fujita \cite{Fu96}]
For any $n \in \Z_{\geq 1}$, let $S_n$ be the set of unnormalized spectral values of a smooth polarized $n$-fold. Then, for any $\varepsilon > 0$, the set $\{\mu \in S_n \st \mu > \varepsilon \}$ is a finite set of rational numbers.
\end{conjecture}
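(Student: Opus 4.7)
The plan is to attack the Spectrum Conjecture for smooth polarized $n$-folds by combining the Minimal Model Program with Birkar's boundedness theorem for $\varepsilon$-log canonical Fano varieties. For a smooth polarized pair $(X, L)$, the unnormalized spectral value is $\mu(X, L) = 1/\tau(X,L)$ where $\tau(X,L) := \sup\{t \in \R_{\geq 0} \st K_X + tL \text{ is pseudoeffective}\}$. A lower bound $\mu > \varepsilon$ translates to the requirement that $K_X + (1/\varepsilon) L$ be non-pseudoeffective, so $L$ is forced to be quite positive compared to $K_X$, and this positivity is exactly what will be exploited.

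First, I would run a $(K_X + t L)$-MMP for $t$ slightly below $\tau(X,L)$, producing a birational contraction to a Mori fiber space $\varphi\colon X' \to Y$ on which a general fiber $F$ carries an induced ample polarization $L'|_F$ and satisfies $-K_F$ ample. The identity $\mu(X, L) = \mu(F, L'|_F)$ (up to a controlled contribution of $Y$) reduces the problem to polarized Fano fibers of dimension $m \le n$; this is the direct algebro-geometric analogue of the Cayley decomposition quoted as Theorem~\ref{Thm: decomposition}. Next, I would invoke Birkar's theorem: the $\varepsilon'$-lc Fano varieties of fixed dimension form a bounded family. The hypothesis $\mu(F, L'|_F) > \varepsilon$, combined with standard Kawamata--Shokurov log-canonical threshold estimates, places $(F, L'|_F)$ in an $\varepsilon'$-lc family for some $\varepsilon' = \varepsilon'(\varepsilon, n) > 0$. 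Boundedness forces the degree of $L'|_F$ to lie in a finite set, hence only finitely many Hilbert polynomials arise, and consequently only finitely many values of $\mu(F, L'|_F)$ above $\varepsilon$. Rationality is then automatic from rationality of nef thresholds on klt pairs.

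The hard part, and the reason the conjecture remains genuinely open, is the passage between the smooth model and the singular Mori fiber space: the MMP does not preserve smoothness, so one must verify that the smooth spectrum $S_n \cap (\varepsilon, \infty)$ is recovered from the spectrum of $\varepsilon'$-lc Mori fibrations, which demands a fine understanding of how $\mu$ changes under extraction of divisors and crepant blow-ups. In particular, one needs to rule out that smooth spectral values accumulate above $\varepsilon$ by approaching singular ones from within a bounded family. A Fine-interior based approach--leveraging the monotonicity $P \subseteq Q \Rightarrow \mu^F(P) \ge \mu^F(Q)$ that is strictly sharper than its classical analogue--suggests tracking an algebro-geometric incarnation of the Fine interior on canonical models in the sense of Batyrev \cite{Bat20} as a monotone invariant under these birational modifications. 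Building such a correspondence, and then lifting the polyhedral finiteness of Theorem~\ref{thm:finitespectrum} across it, is the conceptual advance required; without it, the smooth-to-singular bookkeeping is precisely the obstacle blocking a complete proof.
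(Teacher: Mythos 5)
The statement you are addressing is Fujita's Spectrum Conjecture, which the paper records as a \emph{conjecture}: it is an open problem in algebraic geometry, and the paper neither proves it nor claims to. There is therefore no ``paper's own proof'' to compare against. What the paper actually establishes is a polyhedral analogue, Theorem~\ref{thm:finitespectrum}, asserting finiteness of $\{\mu^F(P) \st P \in \mathcal{S}^F(n,\varepsilon)\}$, strengthening Paffenholz's Theorem~\ref{Thm: finiteqcodspec} by dropping the $\alpha$-canonicity hypothesis. That combinatorial theorem is explicitly \emph{not} offered as a proof of the algebro-geometric conjecture.

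Your proposal is an honest partial sketch, and you correctly identify where it breaks down. The MMP-plus-boundedness strategy you outline (run a $(K_X+tL)$-MMP to a Mori fiber space, restrict to a Fano fiber, invoke Birkar's boundedness of $\varepsilon'$-lc Fanos, deduce finiteness of Hilbert polynomials and hence of spectral values) is a plausible template, but the reduction step is not automatic: the MMP contraction can increase singularities without a priori control, $\mu$ is not obviously computable from a general fiber alone, and there is no established lower bound of the form $\varepsilon' = \varepsilon'(\varepsilon,n)$ on the lc threshold of the resulting Fano fiber deducible from $\mu(X,L) > \varepsilon$ for smooth $(X,L)$. You name this obstruction yourself --- the smooth-to-singular bookkeeping and the possibility of accumulation from within a bounded family --- and that is exactly why the conjecture remains open. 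If you want to engage with what the paper actually proves, the argument for Theorem~\ref{thm:finitespectrum} hinges on Lemma~\ref{lemma: originonlylatticept} (the Fine core normal polytope $A_{\core}^F$ has the origin as its unique relatively interior lattice point), combined with the Lagarias--Ziegler finiteness theorem (Theorem~\ref{thm:lagariasziegler}) and Paffenholz's Lemma~\ref{lemma:finitepolytopesforA}; that chain of ideas is purely lattice-geometric and does not pass through the MMP at all.
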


A polyhedral version of Fujita's conjecture was proven by Paffenholz~\cite[Thm 3.1]{Paf12} even allowing certain, $\alpha$-canonical, singularities (cf. Theorem \ref{Thm: finiteqcodspec} below).
In this paper, we show that the analogous set 
$\{\mu^F \in S^F_n \st \mu^F > \varepsilon \}$ of Fine spectral values is finite without any assumption on the singularities (cf.~Theorem~\ref{thm:finitespectrum}).
As a result, our proof is simpler than Paffenholz', and it should allow for classification results in the future.

\section*{Acknowledgments}
\noindent
The authors would like to thank Benjamin Nill for his insightful comments, helpful reviews and dedication to this project.
The first author was supported by the Deutsche Forschungsgemeinschaft (DFG), Graduiertenkolleg ``Facets of Complexity'' (GRK 2434).

\section{Redefining Polyhedral Adjunction Theory}
\label{sec:polyhedraladjunction}
\noindent
In what follows, unless stated otherwise, we consider $P \subseteq \R^n$ to be an $n$-dimensional rational polytope, which is described in a unique minimal way by inequalities as $P = \{x \in \R^n \st \langle a_i, x \rangle \geq b_i, i=1,...,m\},$ where $b_i \in \Q$ and $a_i \in (\Z^n)^*$ are the primitive rows of a matrix $A$, i.e., they are not the multiple of another lattice vector, and $b \in \Q^m$. We will refer to $P$ being a \textit{rational polytope} as having its vertices lie in $\Q^n$ and we will say that $P$ is a \textit{lattice polytope} if its vertices lie in $\Z^n$. We introduce our first definitions.

\begin{definition}
Let $f$ be the affine functional $f(x) = \langle a , x \rangle -b$ for some $b \in \Q$ and $a \in (\Z^n)^*$. Such a functional is said to be \textit{valid} for a polytope $P$ if for the halfspace $\mathcal{H}_+ := \{x \in \R^n \st f(x) \geq 0\},$ we have that $P \subseteq \mathcal{H}_+$. 
Moreover, if there is some $p \in P$ with $f(p)=0$, i.e., at least one point of $P$ lies in $\mathcal{H}$, the hyperplane generated by $f$, we say $f$ is a \textit{tight} valid inequality for $P$.
\end{definition}

Since $P$ is a polyhedron, note that it can be described by a finite subset of all tight valid inequalities for $P$ of which there is an infinite number, namely at least one for each primitive $a \in (\Z^n)^* \setminus \{0\}$.

\begin{definition} \label{def: fineadj}
Let $\alpha \in (\R^n)^*$ we define the \textit{distance} function associated with $P$ as
$$d_P^F : (\R^n)^* \to \R, \quad \alpha \mapsto \min_{x \in P} \langle \alpha, x \rangle.$$
In terms of this function, for some real number $s > 0$, we may define the \textit{Fine adjoint polytope}, which is a rational polytope, as
$$P^{F(s)} := \{ x \in \R^n \st \langle a , x \rangle \geq d^F_P(a) + s, \text{ for all } a \in (\Z^n)^* \setminus \{0\} \}.$$
We will refer to the study of such Fine adjoint polytopes as \textit{Fine polyhedral adjunction theory}.
\end{definition}

As previously mentioned, the Fine adjoint polytopes we have introduced are a variant of the adjoint polytopes as defined in \cite[Definition 1.1]{DiR14}. In order to compare between these definitions, we recall the original one here.

\begin{definition} \label{def: classical mu}
Let $P$ be a rational polytope of dimension $n$ given by the inequalities $\langle a_i, \cdot \rangle \geq b_i $ for $i=1,...,m$ that define facets $F_1,...,F_m$ in a minimal way. Then for $x \in \R^n$, the \textit{lattice distance} from the facet $F_i$ is given by
$$d_{F_i} := \langle a_i , x \rangle -b_i $$
and the \textit{lattice distance with respect to the boundary} $\partial P$ of $P$ is
$$d_{P} := \min_{i=1,...,m} d_{F_i}(x).$$
For $s>0$, the \textit{adjoint polytope} is defined as
$$P^{(s)}:= \{x \in \R^n \st d_P(x) \geq s\}.$$
\end{definition}

\begin{remark}
In some cases, taking Fine adjoint polytopes of a polytope $P$ will be the equivalent to considering the original adjoint polytopes, as is the case of the rightmost and leftmost examples in Figure \ref{fig:adjoints}.
\end{remark}

In what follows, we will prove a crucial result, namely that only finitely many tight valid inequalities $f_1,...,f_t$ will be relevant when computing the Fine adjoint polytopes. Moreover, from its proof we will obtain a characterization for when exactly an inequality will be relevant for computing the Fine adjoint polytopes.
We make this notion of a relevant inequality more precise.

\begin{definition}
Let $\mathcal{F}$ be the set of all valid inequalities for $P$, where an element $f \in \mathcal{F}$ is of the form $\langle a_f , x \rangle \geq b_f$. A valid inequality $f \in \mathcal{F}$ is said to be \textit{relevant} for $P$ if for some $s > 0$, it holds that
$$\{x \in \R^n \st \langle a_f , x \rangle \geq d^F_P (a_f) + s \text{ } \forall f \in \mathcal{F} \} \neq \{ x \in \R^n \st \langle a_f, x \rangle \geq d^F_P (a_f) + s \text{ }\forall f \in \mathcal{F} \setminus \{f\}\}.$$
The valid inequality $f$ is said to be \textit{irrelevant} if it is not relevant.
\end{definition}

The following proposition will be very useful for our results and computations below.

\begin{prop}[{\cite[Proposition 3.11]{Bat20}}] \label{thm:finitesupport}
Let $P$ be a rational polytope of dimension $n$. Then there exists a finite set $\mathcal{S} \subset \mathcal{F}$ of valid inequalities for $P$ such that the set $\mathcal{S}$ contains all relevant valid inequalities for $P$.
\end{prop}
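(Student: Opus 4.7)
The plan is to combine two classical facts. First, $d_P^F(\alpha)=\min_{x\in P}\langle\alpha,x\rangle$ is (minus) the support function of $P$, hence piecewise linear on $(\R^n)^*$ and linear on each maximal cone $\sigma_v$ of the normal fan $\mathcal{N}(P)$, where it takes the value $\langle \alpha,v\rangle$ at the vertex $v$ dual to $\sigma_v$. Second, Gordan's lemma provides, for each such $\sigma_v$, a finite Hilbert basis $H_{\sigma_v}$ of the semigroup $\sigma_v\cap (\Z^n)^*$, consisting of primitive lattice vectors. Since $P$ has only finitely many vertices, the union of these Hilbert bases is a finite set of primitive vectors.

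Concretely, I would take
$$\mathcal{S}:=\bigcup_{v \text{ vertex of }P}\bigl\{\,\langle h,x\rangle\geq d_P^F(h) \st h\in H_{\sigma_v}\,\bigr\},$$
which is a finite subset of $\mathcal{F}$. The main step is to show that every valid inequality whose normal vector does not appear in some $H_{\sigma_v}$ is irrelevant. Given a nonzero $\alpha\in(\Z^n)^*$, pick a maximal cone $\sigma_v\ni\alpha$ and decompose $\alpha=\sum_i c_i h_i$ with $h_i\in H_{\sigma_v}$, $c_i\in\Z_{\geq 0}$, and $N:=\sum_i c_i\geq 1$ (since $\alpha\neq 0$). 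Linearity of $d_P^F$ on $\sigma_v$ gives $d_P^F(\alpha)=\sum_i c_i\,d_P^F(h_i)$, and taking the corresponding non-negative combination of the Hilbert basis inequalities $\langle h_i,x\rangle\geq d_P^F(h_i)+s$ yields
$$\langle \alpha,x\rangle \;\geq\; d_P^F(\alpha)+Ns \;\geq\; d_P^F(\alpha)+s.$$
Hence the inequality associated to $\alpha$ is implied by $\mathcal{S}$ for \emph{every} $s>0$, so removing it from the defining family of $P^{F(s)}$ does not alter $P^{F(s)}$, i.e., it is irrelevant. Consequently every relevant inequality must lie in $\mathcal{S}$.

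The main technical obstacle is ensuring that the decomposition $\alpha=\sum c_i h_i$ stays inside a single cone on which $d_P^F$ is additive; this is precisely what Gordan's lemma guarantees when applied cone-by-cone in $\mathcal{N}(P)$. The only minor wrinkle is that $\alpha$ may lie on the boundary between adjacent maximal cones, but by continuity of $d_P^F$ either adjacent $\sigma_v$ yields the same value and the argument above applies verbatim.
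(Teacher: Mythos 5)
Your argument is correct. The paper itself does not re-prove this statement but simply cites Batyrev \cite[Prop.\ 3.11]{Bat20}, so there is no in-paper proof to compare against line by line; nevertheless your proof is sound and self-contained. The three ingredients — linearity of the support function $d_P^F$ on each maximal cone $\sigma_v$ of $\mathcal{N}(P)$, Gordan's lemma giving a finite Hilbert basis $H_{\sigma_v}$ of each $\sigma_v\cap(\Z^n)^*$, and the observation that any nonzero $\alpha\in\sigma_v\cap(\Z^n)^*$ decomposes as $\alpha=\sum c_i h_i$ with $N=\sum c_i\ge 1$, yielding $\langle\alpha,x\rangle\ge d_P^F(\alpha)+Ns\ge d_P^F(\alpha)+s$ — combine correctly to show every inequality not in $\mathcal{S}$ is redundant for $P^{F(s)}$ for all $s>0$, and hence irrelevant in the paper's sense (since $\mathcal{S}\subseteq\mathcal{F}\setminus\{f\}$ already forces $P^{F(s)}$ to satisfy $f$). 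The side remark about boundary rays is harmless: since $\mathcal{N}(P)$ is complete, every $\alpha$ lies in at least one maximal cone and the argument can be run there without appealing to continuity.

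One point worth knowing for context: your finite set $\mathcal{S}$, built from Hilbert bases of the full normal cones, is generally coarser than the set used in Batyrev's characterization and in the paper's Corollary~\ref{cor: relevantineqs}, which identifies the relevant normals as exactly the lattice points of $\operatorname{conv}(a_1,\dots,a_m)$ (the convex hull of the facet normals). For the proposition as stated, which only requires \emph{some} finite superset, your $\mathcal{S}$ is entirely adequate; but it does not by itself yield the sharper Corollary~\ref{cor: relevantineqs}. If you later want that refined description, you would need to additionally show that any Hilbert-basis element $h$ that lies outside $\operatorname{conv}(a_1,\dots,a_m)$ is still rendered irrelevant by those normals that do lie inside it — a separate (though related) argument.
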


From Proposition \ref{thm:finitesupport} we obtain a useful description of the relevant valid inequalities which we state as a Corollary.

\begin{corollary} \label{cor: relevantineqs}
Let $P$ be a rational polytope of dimension $n$. The valid relevant inequalities for $P$ of the form $\langle a, \cdot \rangle \geq d_P^F(a)$ correspond to the $a \in (\mathbb{Z}^n)^*$ such that $a \in \operatorname{conv}( a_1,...,a_m)$ where the $a_i$ for $1 \leq i \leq m$ are the primitive inward pointing facet normals of $P$. 
\end{corollary}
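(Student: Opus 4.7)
The plan is to prove the characterization via LP duality applied to the support function $d_P^F$. Since $P$ is a bounded polytope, the primitive inward facet normals $a_1,\ldots,a_m$ positively span $(\mathbb{R}^n)^*$, so every $a\in(\mathbb{Z}^n)^*\setminus\{0\}$ admits a non-negative representation $a=\sum_i\lambda_i a_i$, and LP duality applied to $\min_{x\in P}\langle a,x\rangle$ yields
\[
d_P^F(a) \;=\; \max\Bigl\{\sum_i\lambda_i b_i \;\Big|\; a=\sum_i\lambda_i a_i,\;\lambda_i\geq 0\Bigr\}.
\]

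Next I would translate the convex-hull condition into a condition on coefficient sums. Because $P$ is bounded, the $a_i$ admit a non-trivial non-negative relation $\sum_j\mu_j a_j=0$ with $\mu_j\geq 0$ and $\sum_j\mu_j>0$; adding a suitable multiple of this relation to any representation with $\sum_i\lambda_i=t\leq 1$ produces a convex combination of the $a_i$ summing to $1$. Hence $a\notin\operatorname{conv}(a_1,\ldots,a_m)$ if and only if every non-negative representation of $a$ satisfies $\sum_i\lambda_i>1$.

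The core of the argument is then a short computation. Assume $a\notin\operatorname{conv}(a_1,\ldots,a_m)$ and pick an optimal representation $a=\sum_i\lambda_i^*a_i$ realising $\sum_i\lambda_i^*b_i=d_P^F(a)$; by the previous step $\sigma^*:=\sum_i\lambda_i^*>1$. Since $d_P^F(a_i)=b_i$, the facet inequalities $\langle a_i,x\rangle\geq b_i+s$ are themselves the Fine inequalities attached to $a_1,\ldots,a_m$, and for every $x$ satisfying them and every $s>0$,
\[
\langle a,x\rangle \;=\; \sum_i\lambda_i^*\langle a_i,x\rangle \;\geq\; \sum_i\lambda_i^*(b_i+s) \;=\; d_P^F(a)+\sigma^*s \;>\; d_P^F(a)+s.
\]
So the Fine inequality attached to $a$ is strictly implied by other Fine inequalities and hence irrelevant. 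Contrapositively, relevance of $a$ forces $a\in\operatorname{conv}(a_1,\ldots,a_m)$, and combined with Proposition~\ref{thm:finitesupport} this pins the relevant directions down to the finite lattice set $\operatorname{conv}(a_1,\ldots,a_m)\cap(\mathbb{Z}^n)^*$.

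The main obstacle, if one wishes to read the correspondence as a genuine biconditional rather than an inclusion, is the converse direction: showing that every non-zero lattice point inside $\operatorname{conv}(a_1,\ldots,a_m)$ does realise a relevant inequality for some $s>0$. The delicate part is ensuring the hyperplane $\langle a,x\rangle=d_P^F(a)+s$ actually meets $P^{F(s)}$ in a facet rather than a lower-dimensional face, which I would approach by sweeping $s$ and using continuity of the face structure of $P^{F(s)}$; it also needs a careful treatment of degenerate lattice directions (notably the origin, which is excluded by the non-zero requirement) that might appear in $\operatorname{conv}(a_1,\ldots,a_m)$ without ever defining a facet.
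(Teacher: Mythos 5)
The paper offers no argument of its own for this corollary: it is stated as a consequence of Batyrev's Proposition~\ref{thm:finitesupport}, so there is no ``paper's proof'' to line your proposal up against. That said, your LP-duality derivation of the containment ``$a$ relevant $\Rightarrow a\in\operatorname{conv}(a_1,\dots,a_m)$'' is correct and self-contained. The identity $d_P^F(a)=\max\{\sum_i\lambda_i b_i \mid \sum_i\lambda_i a_i=a,\ \lambda_i\ge 0\}$ is precisely LP duality for $\min_{x\in P}\langle a,x\rangle$; the equivalence ``$a\notin\operatorname{conv}(a_1,\dots,a_m)$ iff every nonnegative representation has $\sum_i\lambda_i>1$'' uses correctly that the $a_i$ positively span $(\R^n)^*$ because $P$ is bounded; and the closing redundancy computation is exactly the Farkas/conic-combination step one wants. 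As a byproduct your argument reproves Proposition~\ref{thm:finitesupport}, since $\operatorname{conv}(a_1,\dots,a_m)\cap(\Z^n)^*$ is a finite set; you do not actually need to invoke that proposition at the end.

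On the converse you flag: you were right to be suspicious, but it is not merely delicate --- it is false, so ``correspond'' in the corollary must be read as a one-sided inclusion rather than a biconditional, and the sweeping-$s$ strategy cannot be made to work. A concrete counterexample: let $P=\operatorname{conv}\{(0,3),(0,-3),(6,0)\}$, whose facet inequalities are $\langle(1,0),x\rangle\ge 0$, $\langle(-1,2),x\rangle\ge -6$, $\langle(-1,-2),x\rangle\ge -6$. The lattice point $a=(0,1)$ lies in $\operatorname{conv}\{(1,0),(-1,2),(-1,-2)\}$ (it is the midpoint of the edge joining $(1,0)$ and $(-1,2)$), and $d_P^F(a)=-3$. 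Yet for every $s>0$, averaging the shifted facet inequalities $x\ge s$ and $-x+2y\ge -6+s$ gives $y\ge -3+s$, so the Fine inequality attached to $a$ is implied by the shifted facet inequalities and $a$ is irrelevant. In general, any lattice $a$ admitting a representation $a=\sum_i\lambda_i a_i$ with $\lambda_i\ge 0$, $\sum_i\lambda_i=1$ and $\sum_i\lambda_i b_i=d_P^F(a)$ is redundant even though it lies in $\operatorname{conv}(a_1,\dots,a_m)$. So the half you did prove is the correct content of the corollary, and you can stop there.
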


We will now consider the polytope $P$ to be defined as
\begin{equation} \label{eq: minimaldescription}
    P = \{x \in \R^n \st \langle a_i, x \rangle \geq b_i, i=1,...,m\},
\end{equation}
where $b_i \in \Q$ and $a_i \in (\Z^n)^*$ are the primitive rows of a matrix $A$ including all relevant valid inequalities for $P$.\\

\begin{remark}
Note that in the Fine case, we have that taking Fine adjoint polytopes satisfies monotonicity with respect to inclusion of polytopes, i.e., if $P$ and $Q$ are two polytopes, such that $P \subseteq Q,$ then we have that for any $s \geq 0$, $P^{F(s)} \subseteq Q^{F(s)}$. This holds since for any $a \in (\Z^n)^*$ it follows that $d_P^F(a) \geq d_{Q}^F(a)$, but this does not necessarily hold in the original polyhedral adjunction case. 
\end{remark}

Now, using the Fine adjoint polytopes, we may reformulate the concept of the $\Q-$codegree.

\begin{definition}
The \textit{Fine $\Q$-codegree} of a rational polytope $P$ is 
$$\mu^{F}(P) := (\sup \{ s > 0 \st P^{F(s)} \neq \emptyset \} )^{-1},$$
and the \textit{Fine core} of $P$ is 
$$\core^F(P) := P^{(1/\mu^{F}(P))}.$$
\end{definition}

\begin{example}
In general, the core and the Fine core of a given polytope can vary and may even have different dimensions, as in the case of the polytopes in Figure \ref{fig:diffcore}.

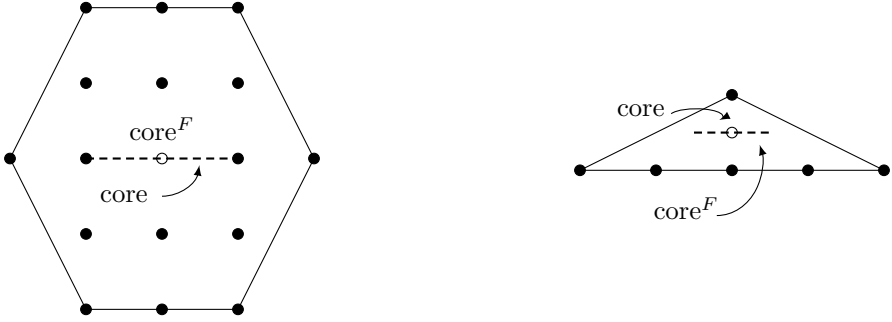
\begin{figure}[H]
    \centering
    \begin{minipage}{.5\textwidth}
        \centering
        \begin{tikzpicture}

\draw (2,0) -- (1,2) -- (-1,2) -- (-2,0) -- (-1,-2) -- (1,-2) -- (2,0);
\node[label={$\core^F$}] at (0,0) {};
\node (core) at (-1/2,-1/2) {$\core$};

\draw[-latex] ($(0,-.5)$) arc
    [
        start angle=-90,
        end angle=-10,
        x radius=0.5cm,
        y radius =0.5cm
    ] ;
    
\draw[black] (0,0) circle (2pt);
\filldraw[black] (0,1) circle (2pt);
\filldraw[black] (0,2) circle (2pt);
\filldraw[black] (0,-1) circle (2pt);
\filldraw[black] (0,-2) circle (2pt);
\filldraw[black] (1,-1) circle (2pt);
\filldraw[black] (1,0) circle (2pt);
\filldraw[black] (1,1) circle (2pt);
\filldraw[black] (-1,-1) circle (2pt);
\filldraw[black] (-1,0) circle (2pt);
\filldraw[black] (-1,1) circle (2pt);
\filldraw[black] (-1,2) circle (2pt);
\filldraw[black] (1,2) circle (2pt);
\filldraw[black] (-1,-2) circle (2pt);
\filldraw[black] (1,-2) circle (2pt);
\filldraw[black] (2,0) circle (2pt);
\filldraw[black] (-2,0) circle (2pt);

\draw[thick, densely dashed] (-1,0) -- (1,0);

\end{tikzpicture}
    \end{minipage}%
    \begin{minipage}{0.5\textwidth}
        \centering
        \begin{tikzpicture}
\draw (-2,0) -- (2,0) -- (0,1) -- (-2,0);
\draw[thick,densely dashed] (-1/2,1/2) -- (1/2,1/2);

\draw[black] (0,1/2) circle (2pt);

\filldraw[black] (-2,0) circle (2pt);
\filldraw[black] (-1,0) circle (2pt);
\filldraw[black] (0,0) circle (2pt);
\filldraw[black] (1,0) circle (2pt);
\filldraw[black] (2,0) circle (2pt);
\filldraw[black] (0,1) circle (2pt);

\node (coreF) at (-.6,-2/4) {$\core^F$};

\draw[-latex] ($(-0.2,-3/5)$) arc
    [
        start angle=-90,
        end angle=20,
        x radius=0.6cm,
        y radius =0.7cm
    ] ;
    
\node (core) at (-1.2,0.8) {$\core$};

\draw[-latex] ($(-.8,0.75)$) arc
    [
        start angle=160,
        end angle= -30,
        x radius=0.35cm,
        y radius =0.1cm
    ] ;

    \end{tikzpicture}
    \end{minipage}
    \caption{Examples of polytopes whose Fine and original cores differ.}
    \label{fig:diffcore}
\end{figure}

\end{example}

\begin{example} \label{Ex: finecorenotcontained}
Consider the polytope given as in \cite[Figure 6]{DiR14} for the case $h=10$ by 
$$P = \operatorname{conv} \begin{bmatrix}
    0 & 2 & 0 & 2 & 0 & 0 \\
    0 & 0 & 4 & 2 & 0 & 4 \\
    0 & 0 & 0 & 0 & 10 & 10 \\
\end{bmatrix}.$$
We may compute the core of $P$ and the Fine core of $P$ to be 
$$\core(P) = \operatorname{conv} \begin{bmatrix}
    4/3 & 4/3 \\
    4/3 & 4/3 \\
    4/3 & 2 \\
\end{bmatrix}, \quad\quad \core^F(P) = \operatorname{conv} \begin{bmatrix}
    1 & 1 & 1 & 1\\
    1 & 1 & 2 & 2\\
    1 & 3 & 1 & 3\\
\end{bmatrix}.$$
Thus, in this case, as seen in Figure \ref{fig:finecoredifferent}, we have that $\core^F(P) \nsubseteq \core(P)$. The core and the Fine core are even disjoint for this example.

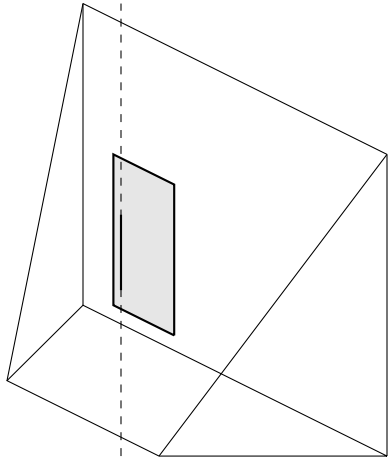
\begin{figure}[H] 
    \centering
    \begin{tikzpicture}
    \draw (4,-2) -- (0,0) -- (-1,-1) -- (1,-2) -- (4,-2); 
    \draw (0,0) -- (0,4);
    \draw (0,4) -- (-1,-1);
    \draw (4,-2) -- (4,2);
    \draw (4,2) -- (0,4);
    \draw (1,-2) -- (4,2);

    \draw[dashed] (0.5,-2) -- (0.5,4);
    \draw[thick] (0.5,0.2) -- (0.5,1.2);

    \filldraw[thick, draw=black, fill opacity=0.1] (1.2,-0.4) -- (0.4,0) -- (0.4,2) -- (1.2,1.6) -- (1.2,-0.4);
    \end{tikzpicture}
    \caption{Polytope whose Fine core is not contained in its classical core.}
    \label{fig:finecoredifferent}
\end{figure}
\end{example}

\noindent
Let us now denote the \textit{normal fan} of a polytope $P$ by $\mathcal{N}(P)$. We will use this notion to define a second invariant for the Fine adjoint polytopes.

\begin{definition}
The \textit{Fine nef value} of a rational polytope $P$ is 
$$\tau^F(P) := (\sup \{s > 0 \st \mathcal{N}(P^{F(s)}) = \mathcal{N}(P)\})^{-1} \in \R_{>0} \cup \{ \infty \}.$$
\end{definition}

\noindent
As opposed to the case of the Fine $\Q$-codegree, here the supremum may not be the maximum.

\begin{definition} \label{Def: gorenstein}
Let $\sigma \subset (\R^n)^*$ be an $n$-dimensional rational polyhedral cone with primitive generators $a_1,...,a_k$. The cone $\sigma$ is called \textit{$\Q$-Gorenstein of index $r_{\sigma}$} if there is a primitive vector $u_{\sigma}$ such that $\langle a_i ,u_{\sigma} \rangle = r_{\sigma}$ for $1 \leq i \leq k$. If $r=1$, the cone $\sigma$ is called \textit{Gorenstein}. We say that a complete rational polyhedral fan $\Sigma$ is \textit{$\Q$-Gorenstein of index r}, resp. \textit{Gorenstein}, if the maximal cones $\sigma \in \Sigma$ are $\Q$-Gorenstein of index $r_{\sigma}$ and $r = \operatorname{lcm}(r_{\sigma} \st \sigma \in \Sigma)$, resp. Gorenstein.
\end{definition}

\begin{definition} \label{def: alphacanonical}
\noindent
If we consider an element $y$ of a $k$-dimensional rational polyhedral cone $\sigma$ with generators $a_1,...,a_k$, then the \textit{height function} associated with the cone $\sigma$ is the piecewise linear function given by

$$\operatorname{height}_\sigma(y) := \max \Bigg\{ \sum_{i=1}^k \lambda_i \text{ } \Bigg| \text{ } y = \sum_{i=1}^k \lambda_i a_i, \text{ and } \lambda_i \geq 0 \text{ for } 1 \leq i \leq k \Bigg \}.$$

\noindent
The cone $\sigma$ is \textit{$\alpha$-canonical} for some $\alpha > 0$ if $\operatorname{height}_\sigma(y) \geq \alpha$ for any integral point $y \in \sigma \cap (\Z^n)^*$. A complete rational polyhedral fan $\Sigma$ is \textit{$\alpha$-canonical} if every cone in $\Sigma$ is $\alpha$-canonical. Furthermore, a cone or fan is \textit{canonical} if it is $\alpha$-canonical for $\alpha = 1$. 
\end{definition}

We will now give a characterization of the finiteness of the Fine nef value. We assume $P$ to have the inequality description given by $\langle a_i , x \rangle \geq d_P^F(a_i)$ for $1\leq i \leq m$ in a unique minimal way as in \ref{eq: minimaldescription}. Let $s \geq 0$ and let $v$ be a vertex of $P$ that satisfies with equality the inequalities given by the $a_i$ for $i \in I$ where the other inequalities are strict for $i \notin I$. Note that $\mathcal{N}(P)$ is $\Q$-Gorenstein of index $r$, then using the notation of Definition \ref{Def: gorenstein}, we may define
$$v(s) := v + \frac{s}{r_\sigma}u_\sigma.$$
We have that $v(s)$ is linear as a function of $s$ and that $\langle a_i, v(s) \rangle = d_P^F(a_i) +s$ for $i \in I$.

\begin{theorem} \label{thm: taufinite}
The Fine nef value $\tau^F(P) < \infty$ if and only if $\mathcal{N}(P)$ is $\Q$-Gorenstein and canonical. 
\end{theorem}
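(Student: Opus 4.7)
The plan is to prove both implications by tracking, for each vertex $v$ of $P$ with inward normal cone $\sigma \in \mathcal{N}(P)$, its candidate image $v(s) = v + (s/r_\sigma)\,u_\sigma$ as a vertex of $P^{F(s)}$. The equality $\mathcal{N}(P^{F(s)}) = \mathcal{N}(P)$ pins this correspondence down: for any vertex $v'$ of $P^{F(s)}$ with inward normal cone $\sigma$, the identities $\langle a_i, v' \rangle = d_P^F(a_i) + s$ and $\langle a_i, v \rangle = d_P^F(a_i)$ for the primitive generators $a_1, \dots, a_k$ of $\sigma$ force $v' = v + w$ for some $w$ with $\langle a_i, w \rangle = s$ for every $i$.

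For the ``only if'' direction, suppose $s > 0$ satisfies $\mathcal{N}(P^{F(s)}) = \mathcal{N}(P)$. Then $w := v' - v$ is a nonzero rational vector on which every primitive generator of $\sigma$ takes the common value $s$. Rescaling $w$ to its primitive integral direction produces $u_\sigma$ with $\langle a_i, u_\sigma \rangle = r_\sigma \in \Z_{>0}$ independent of $i$, which is exactly the $\Q$-Gorenstein condition for $\sigma$. Canonicity then follows directly: for any $a \in \sigma \cap (\Z^n)^* \setminus \{0\}$ the inclusion $v' \in P^{F(s)}$ gives $\langle a, v' \rangle \ge d_P^F(a) + s$, and combining with $d_P^F(a) = \langle a, v \rangle$ collapses to $\langle a, u_\sigma \rangle \ge r_\sigma$. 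In a $\Q$-Gorenstein cone the identity $\operatorname{height}_\sigma(a) = \langle a, u_\sigma \rangle / r_\sigma$ holds because $\sum \lambda_i$ is forced to equal $\langle a, u_\sigma \rangle / r_\sigma$ for every nonnegative representation $a = \sum \lambda_i a_i$, so the previous line is precisely $\operatorname{height}_\sigma(a) \ge 1$.

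For the converse, assume $\mathcal{N}(P)$ is $\Q$-Gorenstein and canonical. I would show that for $s > 0$ sufficiently small, each $v(s)$ is a vertex of $P^{F(s)}$ with inward normal cone $\sigma$. For $a \in \sigma \cap (\Z^n)^* \setminus \{0\}$, canonicity yields $\langle a, v(s) \rangle - d_P^F(a) = s\,\operatorname{height}_\sigma(a) \ge s$, with equality precisely when $a$ is a primitive generator of $\sigma$; this handles both the Fine inequality for such $a$ and pins down the local cone structure at $v(s)$. For $a$ in the finite set $\mathcal{S}$ of relevant inequalities (Proposition~\ref{thm:finitesupport}) that does not lie in $\sigma$, there is strict slack $\delta_{v,a} := \langle a, v \rangle - d_P^F(a) > 0$, and finiteness of $\mathcal{S}$ lets me choose $s$ uniformly small over all vertices $v$ and all such $a$ so that $\delta_{v,a} + (s/r_\sigma)\langle a, u_\sigma \rangle \ge s$. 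This places every $v(s)$ inside $P^{F(s)}$. Since the cones of $\mathcal{N}(P)$ tile $\R^n$ and each $v(s)$ has an inward normal cone (in $P^{F(s)}$) containing $\sigma$, the normal cones at the $v(s)$ must equal the $\sigma$'s, giving $\mathcal{N}(P^{F(s)}) = \mathcal{N}(P)$ and hence $\tau^F(P) < \infty$.

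The main obstacle I anticipate is in the converse: handling the infinitude of inequalities defining $P^{F(s)}$ so that the Fine inequality at each candidate $v(s)$ holds for \emph{all} integral $a$ simultaneously under a single small choice of $s$. This is precisely what Proposition~\ref{thm:finitesupport} buys us, reducing an a priori infinite verification to a finite and hence uniform one. All other steps are routine unwinding of the $\Q$-Gorenstein and canonical definitions via the linear algebra of the vertex shift $v \mapsto v(s)$.
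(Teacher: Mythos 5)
Your proof is correct and rests on the same backbone as the paper's, namely the vertex shift $v \mapsto v(s) = v + (s/r_\sigma)u_\sigma$, but the two directions are filled in differently. For the ``only if'' direction, the paper proves canonicity by contradiction (a non-canonical $a$ would introduce a new facet of $P^{F(s)}$ for every small $s$), while you derive it directly: the Fine membership $v' \in P^{F(s)}$, combined with the $\Q$-Gorenstein identity that $\sum_i\lambda_i = \langle a, u_\sigma\rangle/r_\sigma$ for \emph{every} nonnegative representation $a = \sum_i\lambda_i a_i$, translates exactly into $\operatorname{height}_\sigma(a) \ge 1$. This is cleaner and more transparent than the paper's indirect argument. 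For the ``if'' direction, the paper establishes $P^{F(s)} = \operatorname{conv}(v(s) \st v \text{ vertex of } P)$ by proving both inclusions (the reverse one via a separating-functional argument), whereas you verify $v(s) \in P^{F(s)}$, observe that the normal cone of $v(s)$ in $P^{F(s)}$ contains $\sigma$, and close with a tiling argument: the $\sigma$'s already cover $\R^n$ with disjoint interiors, so the normal cones of the $v(s)$ can be no larger and no further vertex can exist. You also explicitly invoke Proposition~\ref{thm:finitesupport} to justify a single small $s$ working uniformly over the a priori infinitely many valid inequalities — a point the paper treats lightly (``any small enough $s$ will do the trick''). Both strategies succeed; yours is tighter in the forward direction, while the paper's reverse direction has the advantage of yielding the explicit description $P^{F(s)} = \operatorname{conv}(v(s))$.

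One small inaccuracy: the parenthetical ``with equality precisely when $a$ is a primitive generator of $\sigma$'' overstates matters. The height $\operatorname{height}_\sigma(a)$ can equal $1$ for integral $a \in \sigma$ that are not generators; for instance, in the Gorenstein cone generated by $(1,0)$ and $(1,2)$, the point $a = (1,1) = \tfrac12(1,0) + \tfrac12(1,2)$ has height $1$. Your argument only needs the weaker fact that the generators \emph{do} achieve equality, which places $\sigma$ inside the normal cone of $v(s)$; the tiling step then does the rest, so the slip is harmless.
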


\begin{proof} 
\noindent
To see the forward implication, assume that $\tau^F(P) < \infty$. Then there exists some small enough $s > 0$, where without loss of generality we can assume $s \in \Q$, such that $P^{F(s)}$ and $P$ are combinatorially equivalent. Let $v$ a vertex of $P$ and let $v' \in P^{F(s)}$ the vertex corresponding to $v$ under this equivalence. Since $s>0$ then $v'-v \neq 0$. Now, take any linear functional defining a facet incident to $v$, and let this be given by some primitive $a_i \in (\Z^n)^*$. Then we have that
$$\langle a_i , v'-v \rangle =s,$$
which holds for all $a_i$. This shows that $\mathcal{N}(P)$ is $\Q$-Gorenstein. Now, assume that $\mathcal{N}(P)$ is not canonical. Then for some vertex $v$ of $P$ there is a linear functional $a$ such that $a = \sum_{i \in I} \lambda_i a_i$ and $\sum_{i \in I} \lambda_i$ is minimal and strictly less than $1$, where $I$ defines the set of facet defining linear inequalities at $v$. Then for any small $s>0$, it is the case that $a$ will define a facet of $P^{F(s)}$ while it did not define a facet of $P$. Thus $\tau^F(P)$ is infinite.
\bigskip

To see the reverse implication, let us assume that $\mathcal{N}(P)$ is $\Q$-Gorenstein and canonical. Then we can define $v(s)$ for all vertices $v$ of $P$ as in our remarks above. We will show that this implies that for some small $s>0$
\begin{equation} \label{eq: convexhullofvs}
    P^{F(s)} = \operatorname{conv}(v(s) \st v \text{ is a vertex of } P).
\end{equation}

Then, using $v(s) \neq v'(s)$ for $v \neq v'$ and small enough $s$, we obtain a bijection between the vertices of $P$ and $P^{F(s)}$ which preserves incidences with facets. Hence, their face lattices are isomorphic and $\tau^F(P) < \infty$.

For the inclusion $\operatorname{conv}(v(s)) \subseteq P^{F(s)}$, let $\langle a, x \rangle \ge c$ be a valid inequality for $P$. We need to show that for some small $s>0$ and every vertex $v$ of $P$ we have $\langle a, v(s) \rangle \ge c+s$. If $\langle a, v \rangle > c$, any small enough $s$ will do the trick. If $\langle a, v \rangle = c$, then $a$ belongs to the normal cone of $v$. Using the facet defining inequalities $\langle a_i, x \rangle \ge c_i$ which are sharp at $v$, we can write $a = \sum_i \lambda_i a_i$ and $c = \sum_i \lambda_i c_i$ with all $\lambda_i \ge 0$. As we assume $\mathcal{N}(P)$ to be canonical, we have $\sum_i \lambda_i \ge 1$. Hence,
$$\langle a, v(s) \rangle = \sum_{i} \lambda_i \langle a_i, v(s) \rangle = \sum_i \lambda_i (c_i+s) \ge c + s \,.$$

For the other containment, suppose that there is a $w \in P^{F(s)}$ such that $w \notin \operatorname{conv}(v(s))$. Then there exists a linear functional $a \in (\Z^n)^*$ separating $w$ from all the $v(s)$. This $a$ must be contained in the normal cone of some vertex $v$ of $P$. Set
$ c \coloneqq \langle a, v \rangle = \min \{ \langle a, x \rangle \st x \in P\} \,.$
Using the facet defining inequalities $\langle a_i, x \rangle \ge c_i$ which are sharp at $v$, we can write $a = \sum_i \lambda_i a_i$ and $c = \sum_i \lambda_i c_i$ with all $\lambda_i \ge 0$.
The fact that $w \in P^{F(s)}$ translates to the inequalities
$ \langle a_i, w \rangle \ge c_i+s$ for all $i$.

But the fact that $a$ separates $w$ from $\operatorname{conv}(v(s))$ translates to the inequality 
$\langle a, w \rangle < \langle a, v(s) \rangle$
which can be rewritten as
$\sum_i \lambda_i \langle a_i, w \rangle < \sum_i \lambda_i (c_i+s) \,,$
a contradiction.
\end{proof}

\section{Natural Projections in the Fine case}
\label{sec:natproj}

We now want to study the behaviour of the Fine $\Q$-codegree under projections, so we introduce the following definition.

\begin{definition}
Let $K(P)$ be the linear space parallel to $\text{aff}(\core^F(P))$. Then the projection $\pi_P : \R^n \to \R^n/K(P)$ is called the natural projection associated with $P$.
\end{definition}

We now have the following Lemma from \cite{DiR14} which holds with the same proof in the case of Fine Adjunction Theory.

\begin{lemma} \label{lemma:2.2}
Let $x \in \operatorname{relint}(\core^F(P))$. Let us denote by $f_1,...,f_t$ the relevant valid inequalities for $P$ with $d_{f_i}(x) = \mu^F(P)^{-1}$.  Then their primitive inner normals $a_1,..., a_t$ positively span the linear subspace $K(P)^\perp$.\\
Moreover, if $\core^F(P) = \{x\}$, then 
$$\{y \in \R^n \st d_{f_i}(y) \geq 0 \text{ for all } i =1,...,t\}$$
is a rational polytope containing $P$.
\end{lemma}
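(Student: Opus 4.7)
The plan is to exploit that for $x$ in the relative interior of $\core^F(P) = P^{F(\mu^F(P)^{-1})}$, an inequality cutting out the core is tight at $x$ if and only if it is tight throughout the entire core. Consequently each $f_i$ in the statement satisfies $\langle a_i, y\rangle = d^F_P(a_i) + \mu^F(P)^{-1}$ for every $y \in \core^F(P)$, so $a_i$ annihilates every difference $y_1 - y_2$ of points in the core and hence $a_i \in K(P)^\perp$ automatically.

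For the positive spanning I would argue by contradiction. Suppose $\cone(a_1,\dots,a_t) \subsetneq K(P)^\perp$. Since $\cone(a_1,\dots,a_t)$ is finitely generated, hence closed, and the dual of a linear subspace $W \subseteq (\R^n)^*$ in $\R^n$ is precisely its annihilator $W^\perp$, a standard separation argument produces a vector $v \in \R^n$ with $v \notin K(P)$ but $\langle a_i, v\rangle \geq 0$ for every $i$. Consider the perturbation $x + \epsilon v$ for small $\epsilon > 0$: by Proposition~\ref{thm:finitesupport} only finitely many relevant inequalities need to be checked. Among these, the $f_i$ remain feasible at level $\mu^F(P)^{-1}$ thanks to the signs $\langle a_i, v\rangle \geq 0$, while the remaining relevant inequalities are strict at $x$ and hence at $x + \epsilon v$ for sufficiently small $\epsilon$. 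Thus $x + \epsilon v \in \core^F(P)$, which forces $\epsilon v \in \operatorname{aff}(\core^F(P)) - x = K(P)$, contradicting $v \notin K(P)$.

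For the moreover part, when $\core^F(P) = \{x\}$ one has $K(P) = 0$, so the first part yields $\cone(a_1,\dots,a_t) = (\R^n)^*$. The set $Q := \{y \in \R^n \st d_{f_i}(y) \geq 0 \text{ for all } i\}$ is then a rational polyhedron (the $a_i$ are primitive lattice and the $d^F_P(a_i)$ are rational) whose recession cone is the dual cone to $(\R^n)^*$ and hence $\{0\}$, making $Q$ a rational polytope; the containment $P \subseteq Q$ is automatic since each $f_i$ is a valid inequality for $P$. The main obstacle will be the separation step, cleanly translating failure of positive spanning into the existence of a direction $v \in \R^n \setminus K(P)$ with $\langle a_i, v\rangle \geq 0$ for all $i$; the perturbation and the moreover step are then direct.
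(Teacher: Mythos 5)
Your argument is correct and is essentially the one the paper intends: the paper explicitly defers to \cite[Lemma 2.2]{DiR14}, stating the result ``holds with the same proof in the case of Fine Adjunction Theory,'' and that proof proceeds exactly as you describe — a relative-interior argument showing each $a_i$ annihilates $K(P)$, a separation/duality step producing a direction $v\notin K(P)$ with $\langle a_i,v\rangle\ge 0$, and a perturbation $x+\epsilon v$ staying in $\core^F(P)$ to reach a contradiction, with Proposition~\ref{thm:finitesupport} supplying the finiteness of relevant inequalities needed to make the perturbation step work in the Fine setting. The moreover part via the trivial recession cone is likewise the standard conclusion, so no gap here.
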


However, we can prove the following stronger result, which does not hold in the classical polyhedral adjunction case. We include here the proof of the direction that was previously not valid.

\begin{prop} \label{Prop:coreispt}
The image $Q := \pi_P(P)$ of the natural projection of $P$ is a rational polytope satisfying $\mu^F(Q) = \mu^F(P)$. Moreover $\core^F(Q)$ is the point $\pi_P(\core^F(P))$.
\end{prop}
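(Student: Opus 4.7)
Set $s_0 := 1/\mu^F(P)$ and $K := K(P)$. The image $Q = \pi_P(P)$ is a rational polytope because $K$, being parallel to the rational affine space $\operatorname{aff}(\core^F(P))$, is itself rational. Since $\operatorname{aff}(\core^F(P))$ is parallel to $K$, the projection collapses $\core^F(P)$ to a single point $c := \pi_P(\core^F(P))$. Identifying $(\R^n/K)^* = K^\perp \subseteq (\R^n)^*$, the lattice functionals on $\R^n/K$ are precisely those in $K^\perp \cap (\Z^n)^*$, and for any such $a$ the distance satisfies $d_Q^F(a) = \min_{x \in P} \langle a, x \rangle = d_P^F(a)$. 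The first direction $\mu^F(Q) \leq \mu^F(P)$ will then follow by checking that $c \in Q^{F(s_0)}$: for any $x \in \core^F(P)$ and any $a \in K^\perp \cap (\Z^n)^* \setminus \{0\}$, we have $\langle a, c \rangle = \langle a, x \rangle \geq d_P^F(a) + s_0 = d_Q^F(a) + s_0$.

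For the reverse inequality the plan is to invoke Lemma~\ref{lemma:2.2}: the primitive inner normals $a_1, \dots, a_t$ of the relevant tight inequalities at the core positively span $K^\perp$. Since a positively spanning family in a linear space is closed under negation --- write $-a_j$ as a nonnegative combination of the remaining $a_i$ --- for each $j$ one obtains a relation $\sum_i \nu_i^{(j)} a_i = 0$ with $\nu_i^{(j)} \geq 0$ and $\nu_j^{(j)} > 0$. Pairing such a relation with any $x \in \core^F(P)$, where every $f_i$ is tight (i.e., $\langle a_i, x \rangle = d_P^F(a_i) + s_0$), yields $\sum_i \nu_i^{(j)} d_P^F(a_i) = -s_0 \sum_i \nu_i^{(j)}$. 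Then for any $\bar w \in Q^{F(s)}$,
\[
0 \;=\; \Big\langle \sum_i \nu_i^{(j)} a_i,\,\bar w \Big\rangle \;\geq\; \sum_i \nu_i^{(j)} \bigl(d_P^F(a_i) + s\bigr) \;=\; \Big(\sum_i \nu_i^{(j)}\Big)(s - s_0),
\]
forcing $s \leq s_0$. Combined with the previous paragraph this gives $\mu^F(Q) = \mu^F(P)$.

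Finally, for $\bar w \in \core^F(Q) = Q^{F(s_0)}$, the chain of inequalities above with $s = s_0$ collapses to a chain of equalities, forcing $\nu_i^{(j)}\bigl(\langle a_i, \bar w\rangle - d_P^F(a_i) - s_0\bigr) = 0$ for all $i,j$. Choosing for each $j$ a relation with $\nu_j^{(j)} > 0$ yields $\langle a_j, \bar w \rangle = d_P^F(a_j) + s_0 = \langle a_j, c \rangle$, and since the $a_j$ span $K^\perp = (\R^n/K)^*$ we conclude $\bar w = c$. The main obstacle in this plan is precisely the step that turns the qualitative statement ``$a_1,\dots,a_t$ positively span $K^\perp$'' from Lemma~\ref{lemma:2.2} into the explicit nonnegative linear relations exploited above, since these relations are what actually prevent $Q^{F(s)}$ from being nonempty beyond $s = s_0$ and what force the Fine core of $Q$ to collapse onto the single point $c$.
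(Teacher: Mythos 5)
Your proof is correct, and the strategy is the same one the paper takes: the paper's written proof only establishes $\mu^F(Q)\leq\mu^F(P)$ by mapping a Fine-core point of $P$ into $Q^{F(1/\mu^F(P))}$, delegating the reverse inequality and the collapse of $\core^F(Q)$ to the argument of \cite{DiR14} via Lemma~\ref{lemma:2.2}, while you carry that positive-spanning argument out explicitly. The derivation of the nonnegative relations $\sum_i\nu_i^{(j)}a_i=0$ with $\nu_j^{(j)}>0$, the bound $s\leq s_0$ it forces on nonemptiness of $Q^{F(s)}$, and the equality case pinning $\core^F(Q)$ down to the point $c$ are exactly the content the paper inherits rather than reproduces, so your version is a more self-contained presentation of the same proof rather than a different route.
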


\begin{proof}

To prove that $\mu^F(P)^{-1} \leq \mu(Q)^{-1}$, let $g$ be a valid inequality for $Q$ and let $p \in P$ with $\pi_P(p) = q \in \core^F(Q)$. Then, for $\pi_P^*g = f$, we have
$$\mu^F(Q)^{-1} = g(q) - \min_{\Tilde{q} \in Q} g(\Tilde{q}) = \pi^*g_P(p) - \min_{\Tilde{p} \in P} \pi^*_P g(\Tilde{p}) \geq \mu^F(P)^{-1}$$
which concludes our proof.
\end{proof}

\begin{remark}
Note that we have described the behaviour of the Fine $\Q$-codegree under the natural projection of $P$. However, under any projection $\pi'$ of $P$, we still have that $\mu^F(\pi'(P)) \leq \mu^F(P)$.
\end{remark}

\section{Cayley Decompositions and the Fine structure theorem}
\label{sec: cayleydec}

We let $P \subseteq \R^n$ be an $n$-dimensional lattice polytope and we recall the following definition.

\begin{definition} \label{Def: Cayleysum}
Given lattice polytopes $P_0,...,P_t \subseteq \R^k$, then the \textit{Cayley sum} $P_0\ast \cdots \ast P_t$ is the convex hull of 
$$(P_0 \times 0) \cup (P_1 \times e_1) \cup \cdots \cup (P_t \times e_t) \subseteq \R^k \times \R^t$$
for $e_1,...,e_t$ the standard basis of $R^t$.\\ 
\end{definition}

As a means of comparison, we will now define the notion of codegree which comes up in the context of Ehrhart Theory of lattice polytopes \cite{BN07}.

\begin{definition}
Let $P$ be a rational polytope. We define the \textit{codegree} as
$$\operatorname{cd}(P) := \min \{k \in \N_{\geq 1} \st \operatorname{int}(kP) \cap \Z^n \neq \emptyset \}.$$
\end{definition}

Now, let us define the value
$$d^F(P) :=
\begin{cases}
2(n- \lfloor \mu^F(P) \rfloor), \text{ if } \mu^F(P) \notin \N\\
2(n-\mu^F(P)) + 1, \text{ if } \mu^F(P) \in \N
\end{cases}$$

\noindent
We have that $P \cong \Delta_n$ if and only if $\cd(P) = n+1$. Moreover, $\mu(P) \leq \mu^F(P) \leq \cd(P) \leq n+1$, where this relation is obtained from the original adjoint polytopes case in \cite{DiR14}. Since $\mu^F(\Delta_n) = n+1$, we see that $P \cong \Delta_n$ if and only if $\mu^F(P) = n+1$. \\

\noindent
Hence we come to the following strengthening of the Decomposition Theorem for Cayley Sums \cite[Theorem 3.4]{DiR14} whose proof follows the one presented on \cite{DiR14} slightly adapted to the Fine case.

\begin{theorem} \label{Thm: decomposition}
Let $P$ an $n$-dimensional lattice polytope with $P \ncong \Delta_n$. If $n > d^F(P)$, then $P$ is a Cayley sum of lattice polytopes in $\R^m$ with $m \leq d^F(P)$.
\end{theorem}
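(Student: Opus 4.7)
The plan is to follow \cite[Theorem 3.4]{DiR14} essentially verbatim, with the classical $\Q$-codegree $\mu$, adjoint polytopes $P^{(s)}$, and core $\core(P)$ replaced throughout by their Fine analogues $\mu^F$, $P^{F(s)}$, and $\core^F(P)$. The two Fine-specific ingredients that make the stronger statement go through are already proved in the excerpt: Proposition \ref{Prop:coreispt}, which ensures $\mu^F(\pi_P(P)) = \mu^F(P)$ under the natural projection (this is strictly stronger than its classical analogue and is precisely what allows the conclusion to be sharpened), and Lemma \ref{lemma:2.2}, which controls the primitive inward normals of the relevant inequalities at the Fine core.

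Concretely, the argument proceeds in three steps. First, I would use the natural projection $\pi_P$ to reduce to the case $\core^F(P) = \{c\}$: by Proposition \ref{Prop:coreispt}, $Q := \pi_P(P)$ is a rational polytope with $\mu^F(Q) = \mu^F(P)$ and $\core^F(Q) = \{\pi_P(c)\}$ a single point, and any Cayley structure on $Q$ lifts to $P$ along $\pi_P$, each summand being thickened by the linear space $K(P)$ so that dimensions match up. Second, I would apply Lemma \ref{lemma:2.2} at $c$: the relevant inequalities $f_1,\ldots,f_t$ sharp at $c$ have primitive inward normals $a_1,\ldots,a_t$ that positively span the full dual space of the reduced polytope, so there is a positive relation $\sum_i \lambda_i a_i = 0$ with $\lambda_i > 0$. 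From such a relation one extracts a lattice projection from $Q$ onto a standard simplex $\Delta_k$, and the standard characterization of Cayley sums as preimages of $\Delta_k$ under lattice projections identifies $P$ as a Cayley sum of $k+1$ lattice polytopes. Third, I would bound the dimension $m$ of these summands by combining the hypothesis $n > d^F(P)$ with the defining formula for $d^F$ and the chain $\mu^F(P) \le \cd(P) \le n+1$, with $P \not\cong \Delta_n$ ruling out the extremal case $\mu^F(P) = n+1$. This yields $m \le d^F(P)$, as required.

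The main obstacle is the final dimension bound in the Fine setting. In the classical proof, the normals at the core are facet normals of $P$, and the lattice-distance bookkeeping is carried out using explicit facet inequalities. In the Fine setting, the relevant inequalities may come from any primitive dual lattice vector in the convex hull of the facet normals, as characterized by Corollary \ref{cor: relevantineqs}, so the counting must be done with the Fine distance function $d_P^F$ rather than only with facet distances. The technical challenge is to verify that the DiR14 Ehrhart-style dimension count is robust under this broader class of supporting inequalities, and that the lattice projection onto $\Delta_k$ extracted from the positive relation in Lemma \ref{lemma:2.2} genuinely has integral fibers so that the summands are lattice polytopes rather than merely rational. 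Once this is in place, the strengthening over the classical Decomposition Theorem comes for free from the inequality $\mu(P) \le \mu^F(P)$, which makes $d^F(P)$ typically smaller than the classical $d(P)$ and thus the hypothesis $n > d^F(P)$ weaker than its classical counterpart.
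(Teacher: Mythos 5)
Your plan matches the paper's exactly: the paper's entire ``proof'' of Theorem~\ref{Thm: decomposition} is the single sentence that the argument ``follows the one presented on \cite{DiR14} slightly adapted to the Fine case,'' which is precisely what you propose, with the correct identification of the Fine-specific ingredients (Proposition~\ref{Prop:coreispt}, Lemma~\ref{lemma:2.2}, and the comparison $\mu \le \mu^F$). The concerns you flag at the end are handled as in \cite{DiR14}, since by Corollary~\ref{cor: relevantineqs} the Fine core normals remain primitive dual lattice vectors, so the extracted projection onto a standard simplex is a genuine lattice projection and the fibers are lattice polytopes.
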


\noindent
Let us consider the following example where we compute the codegree in our three settings.

\begin{example} \label{ex: alphacan}
Let $\Delta_n(a) := \operatorname{conv}(0,ae_1,e_2,...,e_n)$ for positive integers $a \in \Z_{>0}$, where the $e_i$ for $1 \leq i \leq n$ form the standard basis of $\R^n$. Note that in the case where $a=1$ this consists of the standard simplex which has been argued before that satisfies
$$\operatorname{cd}(\Delta_n(1)) = \mu(\Delta_n(1)) = \mu^F(\Delta_n(1)) = n+1.$$
Thus, let us consider the case where $a>1$ and $n \geq 2$. It is easy to check that $\operatorname{cd}(\Delta_n(a)) = n$. Moreover, it has been computed in \cite{Paf15} that in this case the $\Q$-codegree is given by
$$\mu(\Delta_n(a)) = n-1+\frac{2}{a}.$$
Finally, since in the Fine case the inequality $\sum_{i=2}^n x_i \leq 1$ is valid, it can be computed that
$$\mu^F(\Delta_n(a)) = n.$$
Thus, we obtain that $\mu(\Delta_n(a)) < \mu^F(\Delta_n(a)) = \operatorname{cd}(\Delta_n(a))$.
\end{example}

\noindent
From this example we see that the $\Q$-codegree and the Fine $\Q$-codegree can take different values on the same polytope $P$.

\section{Finiteness of the Fine $\Q$-codegree spectrum}
\label{sec: finespectrum}

It has been proven already that when bounded from below by some $\varepsilon >0$, the set of values that the $\Q$-codegree can take under certain conditions is finite. We will shortly review these conditions for the case of the original polyhedral adjunction theory.\\

Let $P \subseteq \R^n$ be a lattice polytope of dimension $n$, which we assume to be full-dimensional. 
We define the following sets as in \cite{Paf15},
$$\mathcal{S}(n,\varepsilon) := \{P \st P \text{ is an $n$-dimensional lattice polytope}, \mu(P) \geq \varepsilon  \},$$
$$\mathcal{S}^{can}_\alpha(n,\varepsilon) := \{P \st P \in \mathcal{S}(n,\varepsilon) \text{ and } \mathcal{N}(P) \text{ is $\alpha$-canonical}  \}.$$

The theorem proven in \cite{Paf15} is stated as follows.

\begin{theorem}[Paffenholz, {\cite[Theorem 3.1]{Paf15}}] \label{Thm: finiteqcodspec}
Let $n \in \N$ and $\alpha, \varepsilon > 0$ be given. Then
$$\{\mu(P) \st P \in \mathcal{S}^{can}_\alpha (n,\varepsilon) \}$$
is finite.
\end{theorem}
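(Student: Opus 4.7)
The plan is to combine the Cayley Decomposition Theorem with the $\alpha$-canonical hypothesis via induction on $n$. The starting observation is that $\mu(P) \leq \cd(P) \leq n+1$ while $\mu(P) \geq \varepsilon$, so every $\mu(P)$ arising from $\mathcal{S}^{can}_\alpha(n, \varepsilon)$ lies in the compact interval $[\varepsilon, n+1]$; the task is to rule out that infinitely many rationals in this interval appear as $\Q$-codegrees.

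In the high-$\mu$ regime $\mu(P) \geq \frac{n+2}{2}$, I would invoke the weaker version of the Decomposition Theorem proven in \cite{DiR14} to write $P$ as a Cayley sum $P_0 \ast \cdots \ast P_t$ of lattice polytopes each of dimension strictly less than $n$. Since $\mu$ on a Cayley sum is controlled by the number of summands $t+1$ together with the $\Q$-codegrees of the $P_i$, and the summands themselves inherit an $\alpha'$-canonical condition on their normal fans with $\mu(P_i)$ bounded below by a suitable $\varepsilon'$, induction on dimension confines $\mu(P)$ to a finite set in this regime.

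In the low-$\mu$ regime $\varepsilon \leq \mu(P) < \frac{n+2}{2}$, I would apply the classical natural projection $\pi_P : \R^n \to \R^n / K(P)$ from \cite{DiR14} to reduce to a rational polytope $Q = \pi_P(P)$ whose core is a single point and whose $\Q$-codegree still equals $\mu(P)$. The $\alpha$-canonical hypothesis propagates to a height bound for every primitive integral normal in each maximal cone of $\mathcal{N}(Q)$; combined with $1/\mu(P) \leq 1/\varepsilon$ giving bounded distance from $\core(Q)$ to each facet, this yields a uniform bound on the diameter of $Q$ in a lattice chart of the quotient. Consequently $1/\mu(P) = 1/\mu(Q)$ is the optimal value of a linear program whose constraint data has uniformly bounded size, and a Hadamard-type vertex estimate forces its denominator, and hence $\mu(P)$ itself, into a finite set of rationals.

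The main obstacle is the low-$\mu$ regime: transferring $\alpha$-canonicality of $\mathcal{N}(P)$ upstairs into a usable Euclidean bound on the projected polytope $Q$ downstairs is delicate, because the image lattice $\pi_P(\Z^n) \subseteq \R^n / K(P)$ is in general not standard and depends on $P$. It is exactly this step that forces the classical theory to impose the $\alpha$-canonical hypothesis, and, as the paper signals, the step that the Fine variant can bypass entirely by combining Proposition~\ref{thm:finitesupport} with the monotonicity $P \subseteq Q \Rightarrow \mu^F(P) \geq \mu^F(Q)$ from Section~\ref{sec:polyhedraladjunction}, neither of which is available for the classical $\mu$.
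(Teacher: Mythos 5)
This theorem is cited from Paffenholz \cite{Paf15}; the paper does not reprove it, but the proof of the Fine analogue in Section~\ref{sec: finespectrum} closely mirrors Paffenholz's argument and reveals its structure. Paffenholz's proof is in two steps: (i) consider the polytope $A_{\core}$ of core normals (the normals that are sharp on the core at minimal distance), use the $\alpha$-canonical hypothesis to bound the number of interior lattice points of $A_{\core}$, and invoke the Lagarias--Ziegler finiteness theorem (Theorem~\ref{thm:lagariasziegler}) to conclude that up to lattice equivalence there are only finitely many possible core-normal configurations; (ii) show that for each fixed configuration of core normals the set of attainable $\Q$-codegree values above $\varepsilon$ is finite (Lemma~\ref{lemma:finitepolytopesforA}). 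Neither step involves Cayley decomposition, induction on dimension, or natural projections.

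Your proposal follows a genuinely different route and, as written, does not close. In the high-$\mu$ regime, the assertion that $\mu$ of a Cayley sum is determined by the number of summands and the $\mu(P_i)$ is unproven and, in the classical setting, not straightforwardly true; more seriously, the claim that the summands ``inherit an $\alpha'$-canonical condition on their normal fans'' is not justified: the cones of $\mathcal{N}(P)$ for a Cayley sum $P = P_0 \ast \cdots \ast P_t \subseteq \R^k \times \R^t$ live in $(\R^k \times \R^t)^*$ and do not project to the cones of the $\mathcal{N}(P_i)$ in $(\R^k)^*$ in a height-preserving way, so $\alpha$-canonicality of $\mathcal{N}(P)$ gives no direct control on $\mathcal{N}(P_i)$. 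In the low-$\mu$ regime there are two problems beyond the one you flag yourself. First, in the classical theory the natural projection gives only $\mu(Q) \geq \mu(P)$, not equality -- indeed Proposition~\ref{Prop:coreispt} explicitly marks the reverse inequality as the direction that ``was previously not valid'' classically -- so $\mu(Q)$ does not determine $\mu(P)$ and your reduction to $Q$ loses information. Second, even granting a bound on $Q$, the ``Hadamard-type vertex estimate'' requires a uniform bound on the entries of the constraint matrix of the relevant linear program in a fixed lattice basis; you correctly note that the image lattice $\pi_P(\Z^n)$ varies with $P$, but you do not actually produce such a bound, and $Q$ is only rational, not a lattice polytope, so induction on $n$ within the class $\mathcal{S}^{can}_\alpha$ does not close. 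The two obstacles you name at the end are real, and they are precisely what the Lagarias--Ziegler route is designed to circumvent: it attacks the finiteness question on the dual side (the normal polytope $A_{\core}$), where the $\alpha$-canonical hypothesis translates cleanly into a bounded count of interior lattice points, rather than on the primal side where lattice bases change under projection.
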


Note that in this result the $\alpha$-canonical assumption on the polytopes was necessary. 

\begin{example}
A natural example to consider in order to see the importance of this assumption is the family of polytopes
$$\Delta_n(a) = \operatorname{conv}(0,ae_1,...,e_n)$$
where the $e_1,...,e_n$ are the standard basis vectors and $a \in \Z_{>0}$, which was previously studied in Example \ref{ex: alphacan}. For these polytopes, their normal fan is $\Q$-Gorenstein with index $a$ and if $a>1$, then 
$$\mu(\Delta_n(a)) = n-1+\frac{2}{a},$$
but the polytopes $\Delta_n(a)$ are not $\alpha$-canonical for any $\alpha > 0$ and their $\Q$-codegree can take an infinite number of values.
\end{example}

In what follows we will study a generalization of this theorem to the case of Fine adjunction theory. We will follow the proof presented in \cite{Paf15} and adapt it to the Fine polyhedral adjunction case where the remarkable difference will be the fact that we will not be assuming that the polytopes are $\alpha$-canonical, hence in this new setting the theorem holds in much more generality and with much weaker assumptions. We first introduce the following definition.\\

\begin{definition}
A vector $a_i$ is a \textit{Fine core normal} if for all $y \in \operatorname{core}^F(P)$,
$$\langle a_i , y \rangle = d_P^F(a_i) + \mu^F(P)^{-1}.$$
We also define the set
$$\mathcal{S}^F(n,\varepsilon) := \{P \st P \text{ is an $n$-dimensional lattice polytope and } \mu^F(P) \geq \varepsilon  \}.$$
\end{definition}

We can now state our main result.

\begin{theorem} \label{thm:finitespectrum}
Let $n \in \N$ and $\varepsilon > 0$ be given. Then
$$\{\mu^F(P) \st P \in \mathcal{S}^{F}(n,\varepsilon) \}$$
is finite.
\end{theorem}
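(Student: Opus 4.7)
The plan is to adapt Paffenholz's argument for Theorem~\ref{Thm: finiteqcodspec} to the Fine setting, using the natural projection to reduce to a single-point Fine core and then exploiting the richer family of Fine core normals to replace the $\alpha$-canonical hypothesis by a hollow-polytope estimate. First, by Proposition~\ref{Prop:coreispt}, $\pi_P$ maps $P$ to a lattice polytope in the quotient lattice $\Z^n/(\Z^n \cap K(P))$, of lower dimension, with $\mu^F$ preserved and Fine core a single point; so it suffices to prove the finiteness statement for each dimension $k \le n$ under the assumption that $\core^F(P) = \{x\}$ is a single point.

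In that situation, Lemma~\ref{lemma:2.2} supplies Fine core normals $a_1,\dots,a_t \in (\Z^n)^*\setminus\{0\}$ positively spanning $(\R^n)^*$, with $\langle a_i,x\rangle = d^F_P(a_i) + s_0$ for $s_0 := 1/\mu^F(P) \in [1/(n+1),\,1/\varepsilon]$, and $d^F_P(a_i) \in \Z$ because $P$ is a lattice polytope. Choose by Carathéodory $n+1$ Fine core normals $a_{i_1},\dots,a_{i_{n+1}}$ that still positively span, together with their unique coprime positive integer relation $\sum_j c_j\, a_{i_j} = 0$, $c_j \in \Z_{>0}$. Multiplying each equation $\langle a_{i_j},x\rangle = d^F_P(a_{i_j}) + s_0$ by $c_j$ and summing yields
$$ s_0 \;=\; -\,\frac{\sum_j c_j\, d^F_P(a_{i_j})}{\sum_j c_j} \;\in\; \frac{1}{\sum_j c_j}\,\Z. $$
Since $s_0$ lies in a fixed compact interval, a uniform bound $\sum_j c_j \le C(n)$ depending only on $n$ immediately gives finiteness of the Fine spectrum.

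The main obstacle is exactly this bound, and it is where the Fine formulation pays off. Consider the polar polytope $K := s_0\,(x - P)^\circ \subset (\R^n)^*$. For every primitive $0 \ne a \in (\Z^n)^*$ the Fine core condition $\langle a,x\rangle \ge d^F_P(a) + s_0$ rewrites as $h_{x-P}(a) \ge s_0$, and positive homogeneity extends this to every nonzero integer vector, so $\operatorname{int}(K) \cap \Z^n = \{0\}$. By Hensley's theorem for convex bodies with a unique interior lattice point, $\operatorname{Vol}(K) \le V(n)$ for an explicit function of $n$ alone. Since $\operatorname{conv}(a_{i_1},\dots,a_{i_{n+1}}) \subseteq K$ and the standard cofactor identity expresses the $c_j$ as the (signed) maximal minors of the matrix with rows $a_{i_j}$ divided by their gcd, one obtains
$$ \sum_j c_j \;\le\; n!\,\operatorname{Vol}\bigl(\operatorname{conv}(a_{i_1},\dots,a_{i_{n+1}})\bigr) \;\le\; n!\,V(n) \;=:\; C(n), $$
completing the argument. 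In the facet-only setting of \cite{Paf15} generic primitive integer vectors can sit in $\operatorname{int}(K)$, so the key hollowness $\operatorname{int}(K)\cap \Z^n = \{0\}$ fails and the $\alpha$-canonical assumption is the classical workaround for obtaining a comparable bound; with the Fine definition that step is automatic.
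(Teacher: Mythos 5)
Your proposal follows a genuinely different route from the paper: you reduce to the single-point core via the natural projection, extract a small circuit of Fine core normals by Carath\'eodory, read off $s_0$ from the integer relation, and bound the denominator by a volume estimate. The paper instead works directly with the polytope $A_{\core}^F=\operatorname{conv}(a_1,\dots,a_k)$ of \emph{all} Fine core normals: its Lemma~\ref{lemma: originonlylatticept} shows that $0$ is the only relative-interior lattice point of $A_{\core}^F$, Lagarias--Ziegler then gives finitely many such configurations up to lattice equivalence, and Paffenholz's Lemma~\ref{lemma:finitepolytopesforA} finishes. Both arguments hinge on the same structural fact, which the Fine definition hands us for free and which replaces $\alpha$-canonicity, but your version is more self-contained in that it avoids Paffenholz's lemma and makes the arithmetic of $s_0$ explicit.

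There are two real gaps in the write-up, both repairable. First, ``Hensley's theorem for convex bodies with a unique interior lattice point'' is not a theorem: a thin rational triangle such as $\operatorname{conv}\bigl((-\tfrac12,-\tfrac12),(N,-\tfrac12),(-\tfrac12,\tfrac1{2N})\bigr)$ has $(0,0)$ as its only interior lattice point and area $\approx N/4\to\infty$. So you cannot bound $\operatorname{Vol}(K)$. What you can do is apply Hensley (or Lagarias--Ziegler) to the \emph{lattice} simplex $\operatorname{conv}(a_{i_1},\dots,a_{i_{n+1}})$, which has integral vertices and, being contained in $K$, has $0$ as its only interior lattice point; that gives the needed bound on $\operatorname{Vol}(\operatorname{conv}(a_{i_j}))$ directly. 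Second, Carath\'eodory does not produce $n+1$ core normals that positively span $\R^n$: for the cube $[-1,1]^n$ the Fine core normals are $\{\pm e_1,\dots,\pm e_n\}$, and no $n+1$ of them positively span (Steinitz's $2n$ bound is tight). The correct move is to take $m\le n+1$ core normals with $0\in\operatorname{relint}\operatorname{conv}(a_{i_1},\dots,a_{i_m})$ (this is what Carath\'eodory actually gives); these are then minimally dependent, the coprime positive integer relation $\sum c_j a_{i_j}=0$ is unique, and $\operatorname{conv}(a_{i_j})$ is an $(m-1)$-dimensional lattice simplex in the rational subspace it spans whose only relative-interior lattice point is $0$ (since $\operatorname{relint}(K\cap V)=\operatorname{int}(K)\cap V$ for any linear subspace $V$ containing $0$). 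Hensley in dimension $m-1\le n$ then bounds its volume, and the cofactor identity gives $\sum_j c_j\le (m-1)!\operatorname{Vol}\le n!\,V(n)$, after which your discreteness argument for $s_0$ goes through. With these two patches the proposal is a valid and arguably more elementary proof.
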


The proof of our main theorem here will also consist of two main parts. First, we will show that up to lattice equivalence, for a fixed $n \in \Z_{>0}$, there are only finitely many sets of Fine core normals for $n$-dimensional lattice polytopes. Then we will show that each such configuration of core normals gives rise to finitely many values for the Fine $\Q$-codegree above any positive threshold. Thus, if we let $P$ be described by all relevant inequalities as
$$P = \{x \in \R^n \st \langle a_i , x \rangle \geq c_i, i=1,...,m\},$$
note that, up to relabelling, we can assume that the set of Fine core normals consisting of $a_1,...,a_k$ for some $k \leq m$, is the set of valid inequalities defining the affine hull of the Fine core of $P$, $\operatorname{aff}(\core^F(P))$, i.e.,

$$\operatorname{aff}^F(\core(P)) = \{x \st \langle a_i , x \rangle = c_i + \mu^F(P)^{-1}, 1 \leq i \leq k\}.$$

\begin{definition}
Define the set $A_{\core}^F$ to be
$$A_{\core}^F := \operatorname{conv}(a_1,...,a_k) \subseteq (\R^n)^*$$
as the convex hull of the Fine core normals. 
\end{definition}

For $P$ as defined above, the following lemmas will show that all the $a_i$ are vertices of $A_{\core}^F$ and that the origin is a relatively interior point. 

\begin{lemma}[{\cite[Lemma 2.2]{DiR14}}] \label{lemma: origininterior}
The origin is in the relative interior of $A_{\core}^F$.
\end{lemma}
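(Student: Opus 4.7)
The plan is to reduce the statement to Lemma \ref{lemma:2.2}, using only elementary linear algebra once that lemma is applied.

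First, I would observe that every Fine core normal $a_i$ annihilates the subspace $K(P)$. By definition of a Fine core normal, the affine functional $\langle a_i,\cdot\rangle$ is constant on $\core^F(P)$, hence is also constant on $\operatorname{aff}(\core^F(P))$, and therefore vanishes on every direction vector of $\operatorname{aff}(\core^F(P))$. Since $K(P)$ is precisely the linear space parallel to $\operatorname{aff}(\core^F(P))$, this yields $a_i \in K(P)^\perp$ for every $i = 1, \dots, k$.

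Next, I would pick any point $x \in \operatorname{relint}(\core^F(P))$ and identify the Fine core normals with the relevant valid inequalities that are tight at $x$. Because $x$ is a relative interior point of the rational polytope $\core^F(P)$, any segment inside $\core^F(P)$ with one endpoint $y \in \core^F(P)$ can be extended slightly past $x$ while remaining in $\core^F(P)$; an inequality tight at $x$ is then forced to be tight at $y$ as well. Hence the set of relevant inequalities tight at $x$ coincides with the set of Fine core normals $\{a_1,\dots,a_k\}$, and Lemma \ref{lemma:2.2} gives that these vectors positively span $K(P)^{\perp}$.

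The final step is to deduce that the origin lies in the relative interior of $\operatorname{conv}(a_1,\dots,a_k)$. Since the $a_i$ positively span $K(P)^\perp$ and each $-a_j$ itself belongs to $K(P)^\perp$, for every $j$ we may write
$$-a_j \;=\; \sum_{i=1}^{k} \mu_{ij}\, a_i \qquad \text{with } \mu_{ij}\ge 0,$$
which rearranges to $\sum_i (\delta_{ij} + \mu_{ij})\, a_i = 0$. Summing these identities over $j$ yields
$$\sum_{i=1}^{k} \Big(1 + \sum_{j=1}^{k}\mu_{ij}\Big) a_i \;=\; 0,$$
where every coefficient is strictly positive. Dividing by the (positive) sum of the coefficients exhibits $0$ as a strictly positive convex combination of the $a_i$, which is exactly the condition for $0 \in \operatorname{relint}\operatorname{conv}(a_1,\dots,a_k) = \operatorname{relint}(A_{\core}^F)$.

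The main obstacle I anticipate is only the careful matching in the middle step, namely making sure that "tight at a relative interior point of $\core^F(P)$" really does characterize the Fine core normals so that Lemma \ref{lemma:2.2} can be invoked; once that identification is in place, both the containment $a_i \in K(P)^\perp$ and the standard positive-spanning-to-convex-combination argument are routine.
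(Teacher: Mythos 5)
Your proof is correct, and since the paper only cites \cite[Lemma 2.2]{DiR14} without reproducing an argument, this is the natural route that that reference takes as well: reduce to the positive-spanning statement of Lemma~\ref{lemma:2.2} (after matching the Fine core normals with the relevant inequalities tight at a relative interior point of $\core^F(P)$), and then convert positive spanning of $K(P)^\perp$ into a strictly positive convex combination of the $a_i$ representing $0$. One small remark: your first step, that each $a_i$ lies in $K(P)^\perp$, is already a consequence of Lemma~\ref{lemma:2.2} --- a finite set that positively spans a linear subspace must be contained in it --- so it could be omitted, but it does no harm and makes the middle identification more transparent.
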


Moreover, the following result proven in \cite{Paf15} gives us precisely the vertices of $A_{\core}^F$.

\begin{lemma}[{\cite[Lemma 3.6]{Paf15}}] \label{lemma: corenormalvertices}
The vertices of $A_{\core}^F$ are $a_1,...,a_k$.
\end{lemma}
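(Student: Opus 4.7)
The plan is to argue by contradiction, exploiting the fact that by the setup preceding Corollary~\ref{cor: relevantineqs}, all of $a_1, \ldots, a_k$ are among the relevant valid inequalities of $P$. Suppose some $a_j$ is not a vertex of $A_{\core}^F$. Then there exist nonnegative coefficients $\lambda_i$ ($i \neq j$) summing to one with $a_j = \sum_{i \neq j} \lambda_i a_i$. The goal is to show that the inequality $\langle a_j, x \rangle \geq d_P^F(a_j) + s$ is already implied by the remaining ones, making $a_j$ irrelevant --- a contradiction.

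The key first step is to use the Fine core normal condition to obtain a matching convex combination among the distance values $d_P^F(a_i)$. Picking any $y \in \core^F(P)$, each $a_\ell$ with $1 \leq \ell \leq k$ satisfies $\langle a_\ell, y \rangle = d_P^F(a_\ell) + \mu^F(P)^{-1}$. Substituting the convex combination into $\langle a_j, y \rangle$ and cancelling $\mu^F(P)^{-1}$ yields
\[
d_P^F(a_j) = \sum_{i \neq j} \lambda_i\, d_P^F(a_i).
\]
This is the critical equality. In general one only has the concavity inequality $d_P^F(a_j) \geq \sum_{i \neq j} \lambda_i d_P^F(a_i)$, coming from the definition of $d_P^F$ as a minimum of linear forms; the Fine core normal hypothesis is precisely what upgrades this inequality to an equality.

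With this equality in hand, the redundancy argument is direct: for any $s > 0$, whenever $x \in \R^n$ satisfies $\langle a_i, x \rangle \geq d_P^F(a_i) + s$ for all $i \neq j$, taking the convex combination with weights $\lambda_i$ gives
\[
\langle a_j, x \rangle = \sum_{i \neq j} \lambda_i \langle a_i, x \rangle \geq \sum_{i \neq j} \lambda_i (d_P^F(a_i) + s) = d_P^F(a_j) + s.
\]
Hence the defining inequality attached to $a_j$ can be removed without changing $P^{F(s)}$ for any $s > 0$, so $a_j$ is irrelevant, contradicting the setup. I expect the main subtlety to lie in recognizing that one must invoke the full Fine core normal identity --- not merely that $a_j$ is primitive or a tight valid inequality --- in order to bridge from the convex dependence among the normals $a_i$ to the matching convex dependence among the distance values $d_P^F(a_i)$; without this, only the concavity inequality is available and no redundancy follows.
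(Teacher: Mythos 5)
Your proof is correct and follows the same redundancy-by-convexity strategy as Paffenholz's original argument for \cite[Lemma 3.6]{Paf15}, adapted appropriately from ``facet-defining'' to the Fine notion of ``relevant.'' The key steps are all present: the general concavity inequality $d_P^F(\sum\lambda_i a_i)\ge\sum\lambda_i d_P^F(a_i)$ coming from $d_P^F$ being a minimum of linear forms, the upgrade to equality via the Fine core normal identity evaluated at a point $y\in\core^F(P)$, and the resulting implication that the inequality for $a_j$ is implied by those for the remaining Fine core normals for every $s>0$, hence $a_j$ would be irrelevant — contradicting that all $a_i$, $1\le i\le m$, in the chosen description are relevant. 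You also correctly identify the crux: without the core normal identity, only the concavity inequality is available and no redundancy can be deduced from the convex dependence among the normals alone.
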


We now want to show that, independently of the polytope being $\alpha$-canonical or not, the origin is the only lattice point in the relative interior of $A_{\core}^F$.

\begin{lemma} \label{lemma: originonlylatticept}
For $A_{\core}^F$ as above, it follows that $ \operatorname{relint} (A_{\core}^F ) \cap (\Z^n)^* = \{0\}$.
\end{lemma}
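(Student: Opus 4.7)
I will argue by contradiction: suppose there is a lattice vector $a \in \operatorname{relint}(A_{\core}^F) \cap (\Z^n)^*$ with $a \neq 0$. The plan is to show that $a$ must coincide with one of the vertices $a_1, \ldots, a_k$ of $A_{\core}^F$, which is impossible since $0 \in \operatorname{relint}(A_{\core}^F)$ by Lemma~\ref{lemma: origininterior} and the $a_i$'s are nonzero (being primitive), so $A_{\core}^F$ has positive dimension and its vertices lie on its relative boundary.

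The key step is to show that any such $a$ is a primitive Fine core normal. Since $a \in A_{\core}^F$, I can write $a = \sum_{i=1}^{k} \lambda_i a_i$ as a convex combination. For any $x \in \core^F(P)$, using that each $a_i$ is a Fine core normal,
\[
\langle a, x \rangle = \sum_i \lambda_i \langle a_i, x \rangle = \sum_i \lambda_i d_P^F(a_i) + \mu^F(P)^{-1}.
\]
On the other hand, $x \in \core^F(P) = P^{F(1/\mu^F(P))}$ together with $a \in (\Z^n)^* \setminus \{0\}$ force $\langle a, x \rangle \geq d_P^F(a) + \mu^F(P)^{-1}$, yielding $\sum_i \lambda_i d_P^F(a_i) \geq d_P^F(a)$. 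The opposite inequality $d_P^F(a) \geq \sum_i \lambda_i d_P^F(a_i)$ is immediate from $\langle a_i, y \rangle \geq d_P^F(a_i)$ for every $y \in P$. Hence equality holds throughout, $\langle a, x \rangle = d_P^F(a) + \mu^F(P)^{-1}$ on $\core^F(P)$, and $a$ is a Fine core normal. Primitivity then follows: if $a = c\tilde a$ for some primitive $\tilde a \in (\Z^n)^*$ and integer $c \geq 2$, scaling the just-proved equality by $1/c$ gives $\langle \tilde a, x \rangle - d_P^F(\tilde a) = (c\mu^F(P))^{-1} < \mu^F(P)^{-1}$, contradicting $x \in P^{F(1/\mu^F(P))}$.

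To close the argument, I will apply Corollary~\ref{cor: relevantineqs}. Each $a_i$ is a primitive relevant inequality normal, so the corollary gives $a_i \in \operatorname{conv}(F_1, \ldots, F_{m_0})$, where $F_1, \ldots, F_{m_0}$ denote the primitive inward facet normals of $P$; hence $A_{\core}^F \subseteq \operatorname{conv}(F_1, \ldots, F_{m_0})$ and $a$ is a primitive lattice point of this convex hull. Invoking the corollary in the other direction then shows $a$ is itself the normal of a relevant inequality of $P$. Being both a primitive relevant inequality normal and a Fine core normal, $a$ must belong to $\{a_1, \ldots, a_k\}$, and thus by Lemma~\ref{lemma: corenormalvertices} is a vertex of $A_{\core}^F$---the desired contradiction. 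The main obstacle I expect is the first step: establishing the equality $d_P^F(a) = \sum_i \lambda_i d_P^F(a_i)$, which upgrades $a$ from a mere lattice point of $A_{\core}^F$ to a genuine Fine core normal and makes the rest of the argument possible; this is precisely where the Fine theory pays off, as the corresponding statement in the classical adjunction setting requires the $\alpha$-canonical hypothesis.
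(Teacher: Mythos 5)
Your argument takes a genuinely different route from the paper, and it has a gap at the closing step.

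The paper's proof exploits the fact that, since both $0$ and $a$ lie in the relative interior of $A_{\core}^F$, one can write $a$ on the segment between $0$ and a boundary point of $A_{\core}^F$, hence as a nonnegative combination $a = \sum_i \lambda_i a_i$ of vertices with $\sum_i \lambda_i < 1$. Feeding this \emph{strictly subconvex} combination into the Fine core equalities and the superadditivity of $d_P^F$ gives the contradiction
$\mu^F(P)^{-1} \le \langle a, x_{\core}\rangle - d_P^F(a) \le \sum_i \lambda_i\,\mu^F(P)^{-1} < \mu^F(P)^{-1}$ in one step. You instead write $a$ as a genuine convex combination with $\sum_i \lambda_i = 1$, which only yields the equality $\langle a, x\rangle = d_P^F(a) + \mu^F(P)^{-1}$; this equality also holds for boundary points of $A_{\core}^F$ and so does not by itself encode that $a$ is relatively interior. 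That is why you are forced into the extra machinery with Corollary~\ref{cor: relevantineqs}.

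And that is exactly where the gap lies. You ``invoke the corollary in the other direction'' to conclude that a primitive lattice point of $\conv$ of the facet normals must be a relevant inequality normal, hence one of the $a_1,\dots,a_m$, hence (being a Fine core normal) a vertex of $A_{\core}^F$. But the content of Proposition~\ref{thm:finitesupport} and Corollary~\ref{cor: relevantineqs} is only the containment \emph{relevant $\Rightarrow$ in the hull}; the converse is false. For instance, for $\Delta_2(2) = \conv(0,2e_1,e_2)$ the primitive vector $(0,-1)$ lies in $\conv\bigl((1,0),(0,1),(-1,-2)\bigr)$ and satisfies the Fine core equality, yet the inequality $x_2 \le 1-s$ it induces is implied by $x_1+2x_2 \le 2-s$ and $x_1 \ge s$, so $(0,-1)$ is irrelevant and is \emph{not} one of the Fine core normals $a_1,\dots,a_k$ (it sits on the boundary, not a vertex, of $A_{\core}^F$). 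Likewise $0$ itself lies in the hull and is not relevant. Without a valid converse, your chain $a$ primitive Fine-core-equality normal $\Rightarrow$ relevant $\Rightarrow$ vertex of $A_{\core}^F$ does not close. The fix is to use the relative interiority of $a$ at the start to get $\sum_i \lambda_i < 1$, after which the paper's one-line computation already finishes the proof and neither primitivity nor Corollary~\ref{cor: relevantineqs} is needed.
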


\begin{proof}
We prove this by contradiction. Assume there is some vector $a \in (\Z^n)^* \setminus \{0\}$ contained in the relative interior of $A_{\core}^F$. As $0 \in \operatorname{relint}(A_{\core}^F)$, the point $a$ is contained in the cone spanned by the vertices of some facet $F$ of $A_{\core}^F$, and defines the valid inequality $\langle a, x \rangle \geq b$ for $P$. If we let $a_1,...,a_k$ be the vertices of $A^F_{\core}$, we can find $\lambda_1,...,\lambda_k \geq 0$ with $\lambda_i = 0$ if $a_i \notin F$ such that $a = \sum_{i=1}^k \lambda_i a_i$ and satisfying $\sum_{i=1}^k \lambda_i < 1$.
This last inequality follows from the fact that $a$ is in the relative interior of $A_{\core}^F$. Let $x_{\core} \in \operatorname{relint}(\core^F(P))$. By definition, we have that  $\langle a , x_{\core} \rangle - b \geq (\mu^F(P))^{-1}.$ Now, considering the sum over all valid inequalities associated to the core normals $a_i$ for $1 \leq i \leq k$, for such $y$ we obtain
$$ \langle a ,x_{\core} \rangle -b = \sum_{i=1}^k \lambda_i ( \langle a_i,x_{\core} \rangle - b_i) = \sum_{i=1}^k \lambda_i (\mu^F(P))^{-1} < (\mu^F(P))^{-1} $$
where the last inequality follows from the fact that $\sum_{i=1}^k \lambda_i <1$, but this contradicts the previous relation.
\end{proof}

The last result we need in this first part of the proof is the following one by Lagarias and Ziegler. We say two lattice polytopes $P$ and $Q$ are \textit{lattice equivalent} if there is an affine lattice isomorphism mapping $P$ onto $Q$.

\begin{theorem}[Lagarias, Ziegler {\cite[Theorem 1]{LZ91}}] \label{thm:lagariasziegler}
Let integers $n,l \geq 1$ be given. There are, up to lattice equivalence, only finitely many different lattice polytopes of dimension $d$ with exactly $l$ interior points in the lattice $\Z^n$.
\end{theorem}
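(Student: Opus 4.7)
The plan is to derive this result from two classical ingredients: Hensley's volume bound and finiteness of lattice polytopes of bounded volume up to lattice equivalence.

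A preliminary reduction brings us to the full-dimensional case. A $d$-dimensional lattice polytope $P \subset \R^n$ spans a rational affine subspace, and by Smith normal form some element of $\operatorname{GL}_n(\Z)$ together with a lattice translation carries $\operatorname{aff}(P) \cap \Z^n$ onto $\Z^d \times \{0\}^{n-d}$. So without loss of generality, $P \subset \R^d$ is full-dimensional with exactly $l$ interior lattice points.

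Next, I would prove the volume bound $\operatorname{vol}(P) \leq V(d, l)$ depending only on $d$ and $l$. After translating so that an interior lattice point is at the origin, each facet $F$ of $P$ lies on a hyperplane $\langle a_F, x \rangle = -b_F$ with $a_F \in (\Z^d)^*$ primitive and $b_F \in \Z_{\geq 1}$. Decomposing $P$ as a union of cones over its facets and exploiting the integrality of the $b_F$, one argues by induction on $l$: the base case $l = 1$ reduces, after rescaling, to a compactness argument on reflexive-like polytopes, and each inductive step removes an interior lattice point via a carefully chosen hyperplane slice that controls the loss in volume.

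Finally, I would show that for any $V$ and $d$ there are only finitely many full-dimensional lattice polytopes in $\Z^d$ with $\operatorname{vol}(P) \leq V$, up to lattice equivalence. The bound $|P \cap \Z^d| \leq d!\operatorname{vol}(P) + d$ limits the number of vertices of $P$. Picking a vertex and translating it to $0$, select $d$ linearly independent edges $e_1, \dots, e_d$ at $0$: the simplex they span has $|\det(e_1, \dots, e_d)| \leq d! V$, so the Hermite normal form provides a unimodular transformation after which all $e_i$ have coordinates bounded by a function of $d$ and $V$. The main obstacle is then placing the remaining vertices into a uniform box, since $P$ can extend far outside the simplex $\operatorname{conv}(0, e_1, \dots, e_d)$. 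The cleanest resolution is an induction on $d$ using Khinchin's flatness theorem, which exhibits a direction of bounded lattice width for $P$, slices $P$ into $(d-1)$-dimensional lattice polytopes of smaller volume, and stitches these back into a uniform containing box $[-M, M]^d$ with $M = M(d, V)$. Finitely many lattice points in such a box yield finitely many possibilities for the vertex set, and hence for $P$ itself. Combining with the volume bound completes the proof.
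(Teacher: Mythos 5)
The paper does not prove this statement: it is quoted as a theorem of Lagarias and Ziegler \cite{LZ91}, so there is no in-paper proof to compare against. Your overall two-step strategy is the standard one, and it is essentially the one [LZ91] uses: (i) a Hensley-type bound $\operatorname{vol}(P) \le V(d,l)$ for $d$-dimensional lattice polytopes with exactly $l \ge 1$ interior lattice points, and (ii) finiteness, up to unimodular equivalence, of $d$-dimensional lattice polytopes of volume at most $V$. Your reduction to the full-dimensional case via Smith normal form and your appeal to Hensley's bound are fine at the level of a sketch, although the inductive mechanism you describe for proving the volume bound does not match Hensley's actual argument and would need real work.

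The genuine gap is in step (ii). You invoke Khinchin's flatness theorem to produce a direction in which $P$ has bounded lattice width, but that theorem applies to \emph{lattice-point-free} convex bodies, and $P$ is a full-dimensional lattice polytope, hence contains many lattice points. The statement you actually need is ``bounded volume implies bounded lattice width,'' which is true but is a different result (for instance via Minkowski's theorem applied to $(P-P)^{\circ}$), not Khinchin's. More seriously, the proposed induction on $d$ does not close as written: slicing $P$ by the integer hyperplanes orthogonal to the flat direction produces rational polytopes that are in general \emph{not} lattice polytopes (slice $\conv\{(0,0),(2,0),(0,1)\}$ at $x_1=1$ to get the segment from $(1,0)$ to $(1,\tfrac12)$), so the inductive hypothesis about lattice polytopes does not apply to the slices, and you would have to carry denominator bounds through the induction. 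The usual way to finish step (ii) avoids the flatness detour entirely: take a simplex $S\subseteq P$ of maximal volume among simplices with vertices at vertices of $P$; maximality forces $P$ to lie inside the reflection of $S$ through its centroid scaled by $d$, and since $\operatorname{vol}(S)\le \operatorname{vol}(P)\le V$, Hermite normal form of the edge matrix of $S$ puts $S$ --- and hence $P$ --- into a box of side depending only on $d$ and $V$, from which finiteness follows.
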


Since we have proven that for $A_{\core}^F$ the only lattice point in its relative interior is $\{0\}$, combining this result with Theorem \ref{thm:lagariasziegler} we have shown that for a fixed $n \in \Z_{>0}$, only finitely many sets define the Fine core normals of an $n$-dimensional lattice polytope $P$. We record this as the following result.

\begin{corollary} \label{cor:finitecorenormals}
Let $n \in \Z_{>0}$ be fixed. Then, up to lattice equivalence, only finitely many sets define the Fine core normals of some $n$-dimensional lattice polytope $P$.
\end{corollary}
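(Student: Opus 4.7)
The plan is to assemble the three preceding lemmas with the Lagarias--Ziegler finiteness theorem; almost all of the real work has already been done, and what remains is essentially an organized concatenation.

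First, I would observe that $A_{\core}^F$ is itself a lattice polytope: it is defined as the convex hull of the primitive integer vectors $a_1,\ldots,a_k\in(\Z^n)^*$, so its vertices lie in $(\Z^n)^*$ and its dimension is at most $n$. By Lemma \ref{lemma: corenormalvertices}, the set of Fine core normals coincides with the vertex set of $A_{\core}^F$. Consequently, any affine lattice isomorphism between two such polytopes sends the set of Fine core normals of one onto that of the other, so the number of distinct sets of Fine core normals up to lattice equivalence is bounded by the number of distinct polytopes $A_{\core}^F$ up to lattice equivalence.

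Next, I would invoke Lemma \ref{lemma: origininterior} together with Lemma \ref{lemma: originonlylatticept} to record that the origin is the unique lattice point in the relative interior of $A_{\core}^F$. Stratifying by the dimension $d\in\{1,\ldots,n\}$ of $A_{\core}^F$ and applying Theorem \ref{thm:lagariasziegler} within the affine lattice spanned by $A_{\core}^F$ with $l=1$, I obtain that in each dimension there are only finitely many classes; taking the finite union over $d$ then gives the desired finiteness of possible $A_{\core}^F$, and hence of possible sets of Fine core normals.

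The only minor subtlety I anticipate is that $A_{\core}^F$ need not be full-dimensional in $(\R^n)^*$, so Lagarias--Ziegler must be applied on the affine lattice that $A_{\core}^F$ spans rather than on the ambient $(\Z^n)^*$. This is not a genuine obstacle, since the count of relative-interior lattice points is intrinsic to that affine span, and the finiteness up to lattice equivalence in each intrinsic dimension propagates to finiteness in the ambient lattice after fixing a dimension. With this minor bookkeeping addressed, the corollary follows immediately.
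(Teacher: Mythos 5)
Your proof is correct and follows essentially the same route as the paper: combine Lemma \ref{lemma: originonlylatticept} (the origin is the only relative-interior lattice point of $A_{\core}^F$) with the Lagarias--Ziegler finiteness theorem, using Lemma \ref{lemma: corenormalvertices} to pass from polytopes $A_{\core}^F$ back to sets of Fine core normals. The one place where you are more careful than the paper's one-sentence proof is the treatment of the case where $A_{\core}^F$ is not full-dimensional in $(\R^n)^*$ (stratifying by dimension and applying Lagarias--Ziegler intrinsically, then taking a finite union); this is a genuine bookkeeping point that the paper leaves implicit, and your handling of it is right.
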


In what follows, we will continue with the second step of the proof. We make use of the following lemma proven in \cite{Paf15} where we do not require the $\alpha$-canonicity of $P$.

\begin{lemma}[{\cite[Lemma 3.10]{Paf15}}] \label{lemma:finitepolytopesforA}
Fix some $\varepsilon >0$ and some $n \in \Z_{>0}$. Let $P$ be an $n$-dimensional lattice polytope with set of Fine core normals $\mathcal{A}$. Then the set
$$\{\mu^F(P) \st P \text{ is $n$-dimensional with set of Fine core normals $\mathcal{A}$}, \mu^F(P) \geq \varepsilon \}$$
is finite.
\end{lemma}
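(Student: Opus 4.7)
The plan is to extract, from the fixed configuration $\mathcal{A}=\{a_1,\dots,a_k\}$ of Fine core normals, a single linear relation with integer coefficients that forces $\mu^F(P)^{-1}$ to lie in a discrete subset of $\R$ whose denominator depends only on $\mathcal{A}$. Combined with the upper bound $\mu^F(P)^{-1}\le 1/\varepsilon$ coming from the hypothesis $\mu^F(P)\ge\varepsilon$, this yields finiteness.

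First I would invoke Lemma~\ref{lemma: origininterior}: since the origin lies in the relative interior of $A_{\core}^F=\operatorname{conv}(a_1,\dots,a_k)$, there exist strictly positive reals $\lambda_1,\dots,\lambda_k$ with $\sum_{i=1}^k \lambda_i a_i = 0$. Because the $a_i$ are integer vectors, the linear system defining these relations has rational coefficients, so the $\lambda_i$ can be chosen rational, and after clearing denominators we may assume $\lambda_i\in\Z_{>0}$. Set $M:=\sum_{i=1}^k \lambda_i\in\Z_{>0}$; note that $M$ depends only on $\mathcal{A}$ (and on the choice of relation, which we fix once and for all).

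Next, let $P$ be any $n$-dimensional lattice polytope whose set of Fine core normals equals $\mathcal{A}$, and write $c_i:=d_P^F(a_i)$. Since $a_i$ is a primitive integer vector and $P$ is a lattice polytope, the minimum $\min_{x\in P}\langle a_i,x\rangle$ is attained at a lattice vertex, so $c_i\in\Z$. Picking any $x\in\core^F(P)$, the defining equations of $\operatorname{aff}(\core^F(P))$ give $\langle a_i,x\rangle = c_i+\mu^F(P)^{-1}$ for $i=1,\dots,k$. Multiplying the $i$-th equation by $\lambda_i$ and summing,
\[
0 \;=\; \Big\langle \sum_{i=1}^k \lambda_i a_i,\,x\Big\rangle \;=\; \sum_{i=1}^k \lambda_i c_i \,+\, M\,\mu^F(P)^{-1},
\]
so $\mu^F(P)^{-1} = -\tfrac{1}{M}\sum_{i=1}^k \lambda_i c_i\in \tfrac{1}{M}\Z$.

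Finally, the hypothesis $\mu^F(P)\ge\varepsilon$ means $0<\mu^F(P)^{-1}\le 1/\varepsilon$, so $\mu^F(P)^{-1}$ belongs to the finite set $\{\tfrac{1}{M},\tfrac{2}{M},\dots,\tfrac{\lfloor M/\varepsilon\rfloor}{M}\}$, proving the lemma. The only mild obstacle is justifying that the $c_i$ are integers and that the $\lambda_i$ can be chosen in $\Z_{>0}$, both of which are routine consequences of the lattice/primitivity setup and Lemma~\ref{lemma: origininterior}; no $\alpha$-canonicity hypothesis enters anywhere.
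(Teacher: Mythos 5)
Your proof is correct, and it is essentially the argument behind Paffenholz's Lemma~3.10 which the paper simply cites: use Lemma~\ref{lemma: origininterior} to produce a positive integer relation $\sum_i \lambda_i a_i = 0$ among the fixed core normals, pair it with the defining equations $\langle a_i,x\rangle = c_i + \mu^F(P)^{-1}$ of the Fine core to force $\mu^F(P)^{-1}$ into $\tfrac{1}{M}\Z$ with $M=\sum_i\lambda_i$ depending only on $\mathcal{A}$, and then cut down by the bound $\mu^F(P)^{-1}\le 1/\varepsilon$. All the supporting claims you flag (integrality of the $c_i$ from primitivity and latticeness, rationality and then integrality of the $\lambda_i$ from the origin being a rational relative-interior point of $A_{\core}^F$) are indeed routine and correctly justified, and you are right that $\alpha$-canonicity plays no role here — that assumption in Paffenholz's theorem is only needed for the companion step of bounding the possible configurations $\mathcal{A}$, which in the Fine setting is handled instead by Lemma~\ref{lemma: originonlylatticept}.
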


We have now all the ingredients to prove our main result.

\begin{proof}[Proof of Theorem \ref{thm:finitespectrum}]
Combining this last lemma together with the previous one we obtain the following. First of all, by Corollary \ref{cor:finitecorenormals} we have that up to lattice equivalence, there are only finitely many sets of Fine core normals for some $n$-dimensional lattice polytope. Finally, by Lemma \ref{lemma:finitepolytopesforA} the set of values $\mu^F$ is finite for $n$-dimensional lattice polytopes with a fixed set $\mathcal{A}$ of Fine core normals.
\end{proof}

We have shown that in the Fine case, a version of the theorem regarding the finiteness of the $\Q$-codegree spectrum holds, dropping the $\alpha$-canonicity assumption. Hence, considering all valid inequalities is a condition that highly restricts the shape and properties of the polytope $A_{\core}^F$, i.e., the convex hull of the Fine core normals of a polytope $P$, since all such polytopes contain just one lattice point inside, namely the origin. Due to this we are able to prove the result in greater generality for the Fine $\Q$-codegree spectrum.


\providecommand{\bysame}{\leavevmode\hbox to3em{\hrulefill}\thinspace}
\providecommand{\href}[2]{#2}

\bibliography{bibliography}
\bibliographystyle{alpha}

\end{document}